\documentclass[11pt]{amsart}
\usepackage[bottom=1in, right=1in, left=1in, top=1in]{geometry}

\usepackage[dvipsnames,table]{xcolor}
\usepackage{appendix}

\RequirePackage[OT1]{fontenc}
\RequirePackage{amsthm,amsmath}
\RequirePackage[colorlinks,citecolor=magenta,urlcolor=blue,linkcolor=magenta]{hyperref}
\RequirePackage{comment}
\usepackage{caption}
\usepackage{graphicx}
\usepackage{subfigure}
\RequirePackage{latexsym,amssymb}

\RequirePackage{float,epsfig,multirow,rotating,times}
\RequirePackage{upgreek,wrapfig}
\usepackage{tikz}
\usepackage{float}
\usetikzlibrary{matrix}
\usetikzlibrary{arrows}
\usetikzlibrary{decorations.pathreplacing,calligraphy}
\usepackage{nicematrix}
\usepackage{blkarray}
\usepackage{mathdots}
\usepackage{multicol}
\usepackage{bbm}
\usetikzlibrary{patterns}
\usepackage{booktabs}

\usepackage{indentfirst}
\usepackage{textcomp}
\usepackage{lmodern}
\usepackage{mathrsfs}
\usepackage{url}
\usepackage{relsize}
\usepackage{tcolorbox}

\usepackage{ marvosym }
\usepackage[english]{babel}
\usepackage{makecell}
\usepackage{mathtools}
\usepackage{verbatim}
\usepackage[latin1]{inputenc}

\newtheorem{theorem}{Theorem}[section]
\newtheorem{lemma}[theorem]{Lemma}
\newtheorem{corollary}[theorem]{Corollary}

\newtheorem{proposition}[theorem]{Proposition}

\theoremstyle{definition}
\newtheorem{example}[theorem]{Example}
\newtheorem{remark}[theorem]{Remark}
\newtheorem{definition}[theorem]{Definition}

\numberwithin{subcase}{case}

\renewcommand{\tilde}[1]{\widetilde{#1}}

\newcommand{\RR}{{\mathbb R}}

\newcommand*{\QEDA}{\hfill\ensuremath{\Diamond}}

\begin{document}
\title{On the dimensions of correlated equilibrium polytopes of generic games}
\author{Jan Draisma}\address{Mathematical Institute, Sidlerstrasse 5,
3012 Bern, Switzerland}
\email{jan.draisma@unibe.ch} 
\author{Linda Hoyer}\address{RWTH Aachen University}
\email{linda.hoyer@rwth-aachen.de} 
\author{Irem Portakal}\address{Max Planck Institute for Mathematics in the Sciences}
\email{mail@irem-portakal.de} 

\thanks{JD was partially supported by project grant 200021-227864 from the Swiss National Science Foundation. LH was supported by the grant SFB-TRR 195 Symbolic Tools in Mathematics and their Application from the DFG, German Research Foundation.}

\begin{abstract}
In this paper, we study the dimension of the correlated equilibrium polytope of finite games. Under the oriented-matroid notion of genericity, we prove that if a generic game is not full-dimensional, then there exists a subgame whose correlated equilibrium polytope is affinely isomorphic to that of the original game. This settles and generalizes an earlier conjecture of Brandenburg, Hollering, and Portakal (2024). Moreover, we show that the existence of a correlated equilibrium whose slices are all non-zero implies that the correlated equilibrium polytope is either full-dimensional or a singleton.
\end{abstract}
\maketitle
\section{Introduction}
The notion of correlated equilibrium was introduced by Aumann in \cite{Aumann1974Subjectivity}. It generalizes the notion of Nash equilibrium \cite{nash1950equilibrium} by allowing each player to choose their strategy based on a private observation of a common public signal drawn from a joint probability distribution. If no player can increase their expected payoff by deviating from their prescribed strategy, assuming that the others adhere to theirs, then the joint probability distribution of the signal is called a correlated equilibrium. The set of correlated equilibria forms a convex polytope inside the probability simplex \cite{Aumann87}. In contrast to Nash equilibria, whose computation is PPAD-complete \cite{chen2006settling, daskalakis2009complexity} (and for which computing a second equilibrium is even NP-complete \cite[Corollary 2]{conitzer2008new}), correlated equilibria can be computed via linear programming and have therefore attracted considerable attention. Moreover, under certain necessary and sufficient linear conditions, the Nash equilibria lie on the relative boundary of the correlated equilibrium polytope \cite[Section 6.7]{Viossat2005}.

In this paper, for generic games, i.e.\ for a dense open set of games, our purpose is to study the dimensions of these polytopes. The openness criterion we use was defined in \cite[Section 5]{CombCorrEquilibria} via oriented matroid strata (Definition~\ref{de:Generic}). This open set was described in detail for $2\times n$ games. The authors in \cite[Conjecture 5.1]{combinatorics-correlated-equilibria} conjecture that whenever the correlated equilibrium polytope $P_G$ of a $2\times n$ game $G$ is not full-dimensional, there exists a generic $2\times\tilde{n}$ subgame, with $\tilde{n}<n$, whose correlated equilibrium polytope is full-dimensional and combinatorially equivalent to $P_G$. We establish a generalization of this conjecture for arbitrary finite games.

\begin{theorem}[{Theorem~\ref{MainTheorem}}]
Let $G$ be a generic game. If the correlated equilibrium polytope $P_G$ is not full-dimensional, then there is
a proper subgame $\tilde{G}$ of $G$ i.e.\ a game with smaller strategy sets, such that $P_{\tilde{G}}$ is full-dimensional and $P_G$ is affinely isomorphic to $P_{\tilde{G}}$.
\end{theorem}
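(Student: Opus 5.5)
The plan is to look at the affine hull of $P_G$ and read off which coordinates and which inequalities become trivial on it. Write $P_G\subseteq\Delta\subseteq\RR^{S_1\times\cdots\times S_n}$ as the set of probability vectors $p$ satisfying the incentive inequalities
\[
\sum_{s_{-i}} \bigl(X^{(i)}_{s_i,s_{-i}}-X^{(i)}_{t_i,s_{-i}}\bigr)\,p_{s_i,s_{-i}}\ \ge 0,
\qquad i\in[n],\ s_i\neq t_i\in S_i,
\]
together with $p\ge 0$.

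If $P_G$ is not full-dimensional (inside the simplex), then some of these inequalities hold with equality on all of $P_G$. Let $E$ be this implicit-equality set. I will argue in two steps.

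\emph{Step 1: the implicit equalities force coordinates to vanish.} For each pair $(s_i,t_i)$ in $E$, the linear form $\ell_{s_i,t_i}$ involves only the slice $p_{s_i,\cdot}$, and its coefficients are the payoff differences. These differences are generic, meaning nonzero and in general position in the oriented-matroid sense of Definition~\ref{de:Generic}. I want to show that $\ell_{s_i,t_i}\equiv 0$ on $P_G$ can only happen because every $p\in P_G$ is supported on a set where the form is identically zero. Concretely, let $\tilde S_i\subseteq S_i$ be the strategies $s_i$ with $p_{s_i,\cdot}\neq 0$ for some $p\in P_G$, and set $\tilde G$ to be the restricted game on $\prod_i\tilde S_i$. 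By convexity there is a single $p\in P_G$ whose support is maximal. The claim is that, after the coordinate equalities $p_s=0$ for $s\notin\prod\tilde S_i$ are taken into account, no further implicit equalities remain. In other words, the restriction of $P_G$ to the face $\{p_s=0 : s\notin\prod\tilde S_i\}$ is full-dimensional in that face.

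This is the main obstacle. The approach I would try is to argue by contradiction. Suppose an incentive constraint $\ell_{s_i,t_i}$ with $s_i\in\tilde S_i$ is an implicit equality. Then, by linear-programming duality (Farkas), a nonnegative combination of the active constraints equals $-\ell_{s_i,t_i}$ modulo the vanishing coordinates, and this yields a linear dependency among the payoff-difference vectors, with prescribed signs, supported on the coordinates in $\prod\tilde S_i$. Genericity in the oriented-matroid sense should forbid such a signed circuit unless it is forced by the combinatorial type. I would then show that the combinatorial type permits such circuits only when they involve deleted strategies. I expect to need the observation that, for $p$ in the relative interior, each active constraint pair $(s_i,t_i)$ with both $s_i,t_i$ surviving would give a nongeneric identity. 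If $t_i\notin\tilde S_i$, the constraint is simply absent in $\tilde G$.

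\emph{Step 2: the affine isomorphism.} Deleting strategies outside $\tilde S_i$ does more than zero out coordinates: it also removes the deviation constraints toward $t_i\notin\tilde S_i$. I must check that these constraints are redundant on $P_{\tilde G}$, i.e.\ that the inclusion $\RR^{\prod\tilde S_i}\hookrightarrow\RR^{\prod S_i}$ maps $P_{\tilde G}$ exactly onto $P_G$. One inclusion is clear. For the other, I would use Step 1 together with genericity: if some $p\in P_{\tilde G}$ violated a deviation to a deleted $t_i$, then the full-dimensional polytope $P_{\tilde G}$ would be cut by a hyperplane. I would show that $P_G$, which lies on one side of that hyperplane, then also has support-maximal points that contradict the maximality of $\tilde S$. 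Alternatively, I would compare dimensions and rule out the case where the constraint cuts $P_{\tilde G}$ properly. Since the map is a coordinate embedding, it is affine and injective, which gives the affine isomorphism. Properness of $\tilde G$ follows because, if every $\tilde S_i=S_i$, Step 1 would give full-dimensionality of $P_G$, contrary to the hypothesis.
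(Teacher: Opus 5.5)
Your overall architecture matches the paper's Theorem~\ref{thm:MainTheorem2}: your $\tilde{S}_i$ is its $S_c^{(i)}$, and you then show $P_G=P_{\tilde G}$ with $P_{\tilde G}$ large. However, there is a genuine gap. \textbf{Your Step 1 claim is false as stated, and the mechanism you propose for it cannot work.}

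Take Matching Pennies. It is generic in the sense of Definition~\ref{de:Generic}, since it avoids the four boundary hyperplanes of the $2\times 2$ example. Its unique correlated equilibrium is the uniform distribution, so $\tilde{S}_i=S_i$, yet $P_G$ is a point. All four incentive constraints, which involve only surviving strategies, are implicit equalities. Your properness argument (``if every $\tilde{S}_i=S_i$, Step 1 gives full-dimensionality'') therefore breaks. The correct dichotomy is the paper's: $P_{\tilde G}$ is full-dimensional \emph{or a singleton}, and in the singleton case one must take a $1\times\cdots\times 1$ subgame instead.

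The Farkas/circuit heuristic fails because many maximal minors of $A_G$ vanish identically, due to its tensor structure. For a $2\times 2$ game, set $a=X^{(1)}_{11}-X^{(1)}_{21}$, $b=X^{(1)}_{12}-X^{(1)}_{22}$, $c=X^{(2)}_{11}-X^{(2)}_{12}$, $e=X^{(2)}_{21}-X^{(2)}_{22}$. Then the rank-one tensor $(e,-c)\otimes(b,-a)$ annihilates all four incentive rows for every $X$. So a dependency among active incentive constraints on surviving strategies is not a ``nongeneric identity''. Definition~\ref{de:Generic} only says the oriented matroid is locally constant in $X$. No static circuit count separates the singleton case from the full-dimensional one. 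Your sentence ``the combinatorial type permits such circuits only when they involve deleted strategies'' is essentially the theorem restated, not a route to it.

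\textbf{The missing idea is to use that local constancy actively, by perturbing $X$ inside the stratum.} The paper proceeds in these steps:
\begin{enumerate}
\item It perturbs the payoffs to make a chosen equilibrium balanced (Proposition~\ref{PropositionBalancing}), so deviations between non-proportional slices become strict.
\item It then mixes in Nash equilibria of block subgames (Lemma~\ref{LemmaSubgameAdd}) to raise the rank statistic $r(p)$.
\item It inducts on $\sum_i d_i-r(p)$ until every incentive constraint is strict (Theorem~\ref{TheoremFullOrSingleton}).
\item The induction needs a non-Nash starting point. This comes from Proposition~\ref{prop:NotAllNash}, via a density result for games with finitely many Nash equilibria, and that is exactly where the singleton case is split off.
\end{enumerate}

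Step 2 has a related gap. To contradict maximality of $\tilde S$, you must actually produce an equilibrium with a nonzero slice at a deleted best response $j$. Mixing in a tensor concentrated on that slice stays feasible only if the other players' incentive constraints are strict at the base point. They must be strict both inside $\tilde S$, which comes from an interior point of the full-dimensional $P_{\tilde{G}'}$, and toward deleted strategies, which comes from balancedness. Both are available only after perturbing to some $G'\sim G$, with Remark~\ref{RemarkGenericness}(2) transferring the conclusion back to $G$. In $G$ itself, your sketch gives no way to produce such a point.
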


A related line of work is the dual reduction for correlated equilibria introduced by Myerson \cite{MYERSON1997183}. Any correlated equilibrium polytope can be reduced by iterating this procedure to a full-dimensional one; however, while every correlated equilibrium of the reduced game is also a correlated equilibrium of the original game, Myerson shows in \cite[Section 5]{MYERSON1997183} that the converse does not always hold. In contrast, our reduction preserves the entire correlated equilibrium polytope up to affine isomorphism, and is therefore strictly stronger. 

In \cite[Proposition 1]{VIOSSAT20081152}, it was shown that the class of games with a unique correlated equilibrium forms an open set. Moreover, Viossat \cite{viossat2003elementary} studies games with full-dimensional correlated equilibrium polytopes using an auxiliary game. In Appendix B, he shows that the correlated equilibrium polytope $P_G$ is full-dimensional if and only if there exists a totally mixed correlated equilibrium $p \in \Delta^{\circ}_{D-1}$, and there exists a vector $x \in\mathbb{R}^D$ satisfying all nonvacuous incentive constraints with strict inequality. In contrast, we provide a sufficient condition under which $P_G$ is either full-dimensional or a singleton that only requires the existence of a correlated equilibrium $p\in P_G$ whose every slice $p_{-i,k}$ is nonzero for all $i\in[n]$ and $k\in[d_i]$.

\begin{theorem}[{Theorem~\ref{TheoremFullOrSingleton}}]
Let $G=(n,S,X)$ be a generic game. Assume there exists a point $p \in P_G$ such that its $k$-th slice in the $i$-th direction, $p_{-i,k}$, is nonzero for all $i \in [n]$ and $k \in [d_i]$. Then the correlated equilibrium polytope $P_G$ is either full-dimensional or a singleton.     
\end{theorem}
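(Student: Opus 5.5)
We argue by contrapositive-style case analysis on the equality set of a point with all slices nonzero. Recall that $P_G$ lies in the simplex $\Delta_{D-1}$ and is cut out by the incentive constraints
\[
\sum_{s_{-i}} \bigl(X^{(i)}_{k,s_{-i}}-X^{(i)}_{l,s_{-i}}\bigr)\,p_{k,s_{-i}}\ \ge 0,\qquad i\in[n],\ k\neq l\in[d_i].
\]
Each constraint $(i,k,l)$ only involves the slice $p_{-i,k}$ and the coefficient vector $a^{(i)}_{k,l}=X^{(i)}_{k,\cdot}-X^{(i)}_{l,\cdot}$. Genericity in the oriented-matroid sense (Definition~\ref{de:Generic}) will be used in the following form: for each $i$ and $k$, the vectors $a^{(i)}_{k,l}$ ($l\ne k$), together with coordinate vectors on the $k$-th slice, are in general position. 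In particular, no nontrivial linear relation holds among them other than those forced by dimension counts.

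\emph{Step 1: the relative interior point.} Let $p\in P_G$ have all slices $p_{-i,k}\neq 0$, and replace $p$ by a point in the relative interior of $P_G$. This preserves the nonzero-slice property, because the support can only grow along the segment. The dimension of $P_G$ is then governed by the set $E$ of incentive constraints tight at $p$, together with the coordinates vanishing at $p$. Write $Z$ for the zero set of $p$. Then $\mathrm{aff}(P_G)=\{x:\sum x=1,\ x_Z=0,\ a^{(i)}_{k,l}\cdot x_{-i,k}=0 \text{ for }(i,k,l)\in E\}$.

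\emph{Step 2: if nothing is tight, $P_G$ is full-dimensional.} If $Z=\emptyset$ and $E=\emptyset$, then $P_G$ is full-dimensional. So assume $Z\cup E\neq\emptyset$. We want to show the equalities then cut the affine hull down to a point. The key claim is a propagation statement: if some constraint $(i,k,l)$ is tight at $p$, or some coordinate in slice $(i,k)$ vanishes, then genericity forces enough tight constraints or zeros in that slice to pin $p_{-i,k}$ down to a line. Concretely, a tight constraint $a^{(i)}_{k,l}\cdot p_{-i,k}=0$ with $p_{-i,k}\ge0$ nonzero, where $p_{-i,k}$ has support $T$, means $a^{(i)}_{k,l}|_T$ has mixed signs. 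Genericity of the oriented matroid forbids $p_{-i,k}$ from being in the relative interior of a face unless the number of tight constraints plus $|$complement of $T|$ is at least $\dim -1$.

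For this I would first try the following argument. If the tight system in slice $(i,k)$ left a $\ge2$-dimensional cone of nonnegative solutions, we could perturb $p$ inside $P_G$ to make a new coordinate vanish or a new constraint tight. That would contradict $p$ lying in the relative interior unless this coincidence is non-generic. Since every slice is nonzero, this line-fixing happens in every slice that meets $Z\cup E$.

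\emph{Step 3: spreading across slices.} Because slices in different directions overlap (each coordinate $p_s$ belongs to one slice for each player), fixing a slice up to scaling forces zeros or tightness in slices of the other players through the shared coordinates. This is the step where nonzero slices are essential: the slice $(j,s_j)$ containing a coordinate already determined is nonzero, hence subject to the same analysis. A connectivity argument over the bipartite structure of slices then shows that all slices are determined up to a common scalar, so $\mathrm{aff}(P_G)\cap\Delta$ is a single point.

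The main obstacle is Step 2: extracting from the oriented-matroid genericity exactly the rank statement "one tight condition in a nonzero slice forces the whole slice to be rigid up to scaling". I expect this to need the earlier structural results of the paper on which sign vectors occur for generic games. I would first attempt it via a Farkas/Gordan alternative applied slice by slice.
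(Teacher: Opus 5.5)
There is a genuine gap, and it is essentially the whole theorem. Step~1 and the easy half of Step~2 are fine, but the rigidity claim of Step~2 does not follow from the slice-wise general position you extract from Definition~\ref{de:Generic}. That slice-wise statement is true: pad a square submatrix of rows supported on the slice $\{s: s_i=k\}$ with the coordinate rows of all other positions. But it only says that the rows tight in a slice have the expected rank. Suppose $p_{-i,k}$ has full support and a single constraint $a^{(i)}_{k,l}\cdot p_{-i,k}=0$ is tight. Then the nonnegative solutions of that equation form a cone of dimension $D/d_i-1$. This is at least $2$ as soon as $D/d_i\ge 3$, and nothing about it is a non-generic coincidence: it only requires $a^{(i)}_{k,l}$ to have entries of both signs, which is an open condition. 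So no slice-by-slice Farkas/Gordan argument can pin the slice to a line. Whatever rigidity there is must come from the coupling of different players' slices through shared coordinates, and that is exactly what Step~3 asserts without proof.

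The justification you sketch is also a non sequitur. From a relative interior point one can always move inside $P_G$ until a new coordinate vanishes or a new constraint becomes tight, and this contradicts nothing. Your argument would in particular have to exclude that $P_G$ is a positive-dimensional family $\{x^{(1)}\otimes\cdots\otimes x^{(n-1)}\otimes u : u\in Q\}$ of totally mixed Nash equilibria. There every incentive constraint is tight, yet the slices of players $i<n$ move with $u$. The paper excludes this only through Proposition~\ref{prop:NotAllNash}, whose proof rests on Harsanyi's density theorem, not on linear algebra inside one slice.

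The idea you are missing is that genericity is used here as stability under perturbation of the payoffs. For all $X'$ near $X$ one has $G\sim G'$, so $P_{G'}$ has the same face poset, in particular the same dimension, as $P_G$. Hence one may move $X$, not just the point. The paper proceeds as follows.
\begin{enumerate}
\item Proposition~\ref{PropositionBalancing} perturbs $X$ so that a given $p$ becomes balanced. After that, a constraint $(i,k,\ell)$ with $p_{-i,k}\neq 0$ can be tight at $p$ only if $\pi_{-i,k}(p)=\pi_{-i,\ell}(p)$. So the obstruction to full-dimensionality is measured by $r(p)$, the number of distinct normalized slices, not by ranks of tight rows within a slice.
\item The proof then raises $r(p)$ by mixing in a Nash equilibrium of the subgame on $\prod_i M^{(i)}_1$ (Lemma~\ref{LemmaSubgameAdd}), via a rank-two argument. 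This needs a non-Nash starting point, supplied by Proposition~\ref{prop:NotAllNash}; this is where the non-singleton hypothesis enters.
\item When that Nash equilibrium coincides with the normalized block of $p$, one further payoff perturbation is needed.
\item Once $r(p)=\sum_i d_i$ with $p$ balanced and positive (Corollary~\ref{CorollaryNonZero}), every constraint is strict, so $P_{G'}$, and hence $P_G$, is full-dimensional.
\end{enumerate}
Without some substitute for this payoff perturbation, your Steps~2 and~3 restate the theorem rather than prove it.
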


During the course of this paper, many examples were computed with the \texttt{Sagemath} script in \cite{SageCode} and GameTheory.m2 \cite{connelly2025gametheory} package.
\section{Preliminaries}
Let $n \geq 2$ be the number of players. For each $i \in
[n]:=\{1,\ldots,n\}$, let
$S^{(i)}$ be the set of pure strategies of player $i$. Moreover, let
$X^{(i)}_{j_1, \dots, j_n}$, $j_{i'} \in S^{(i')}$, be the payoff for
player $i$ when each player $i'$ chooses the pure strategy $j_{i'}$.
Set $X:=(X^{(1)}, \dots, X^{(n)})$, $S:=\prod_{i=1}^n S^{(i)}$, and
let $G:=(n,S,X)$ be the corresponding game in normal-form. 

\begin{definition}
Let $G=(n,S,X)$ be a game. A \textit{subgame}
$\tilde{G}=(n,\tilde{S},\tilde{X})$ of $G$ is a game where \[
\tilde{S}=\prod_{i=1}^n \tilde{S}^{(i)}, \ \tilde{S}^{(i)}
\subseteq S^{(i)} \ \text{for all} \ i \in [n] \] and
\[\tilde{X}=(\tilde{X}^{(1)}, \dots, \tilde{X}^{(n)}), \
\tilde{X}^{(i)}_{j_1, \dots, j_n}=X^{(i)}_{j_1, \dots, j_n} \text{ for
all } j_i \in \tilde{S}^{(i)}, \ i \in [n].\]
\end{definition}

Writing $d_i:=|S^{(i)}|$, we will say in the sequel that $G$ is a $(d_1
\times  \dots \times d_n)$-game. Set $D:=\prod_{i=1}^n d_i$. By slight
abuse of notation, we will identify $\RR^D$ with $\bigotimes_{i=1}^n
\RR^{S^{(i)}}$. By $\Delta_{D-1} \subseteq \RR^D$ we denote the
probability simplex. When no confusion is possible, we will just take
$S^{(i)}=[d_i]$. Of course, if we then pass to a subgame
with $|\tilde{S}^{(i)}|=\tilde{d}_i \leq d_i$, we can relabel the strategies in
the subgame to $1,\ldots,\tilde{d}_i$.

An element $p \in \Delta_{D-1}$ of the probability simplex determines
the distribution of a random variable taking values in $\prod_{i=1}^n [d_i]$. We then write
\[p_{-i,k}:=(p_{j_1, \dots ,j_{i-1},k,j_{i+1}, \dots, j_n})_{j_1 \in
[d_1], \dots, j_{i-1} \in [d_{i-1}], j_{i+1} \in [d_{i+1}], \dots, j_{n}
\in [d_{n}] } \in \bigotimes_{i' \neq i} \RR^{d_{i'}}=:\RR^{D/d_i}\]
for any $i \in [n], k \in [d_i]$. This is the $k$-th
slice of the tensor $p$ in the $i$-th direction, and if $p_{-i,k}$ is
not identically zero, then
up to scaling with the marginal, it represents the conditional probability distribution on
$\prod_{i' \neq i} [d_{i'}]$ given that the $i$-th entry of the random
variable equals $k$.

For each player $i$ and each $k,\ell \in [d_i]$, we define the linear
form $H^{X^{(i)}}_{k,\ell}$ on $\RR^{D/d_i}$ by
\[H^{X^{(i)}}_{k,\ell}(q):=\sum_{j \in \prod_{i' \neq i}
[d_{i'}]} \left(X^{(i)}_{j_1,\dots, j_{i-1},k,j_{i+1} \dots,
j_n}-X^{(i)}_{j_1,\dots, j_{i-1},\ell,j_{i+1} \dots, j_n}   \right)
q_{j_1,\dots, j_{i-1},k,j_{i+1} \dots, j_n};   \] 
note that $H^{X^{(i)}}_{\ell,k}=-H^{X^{(i)}}_{k,\ell}$.

\begin{definition} \label{de:CorrEq}
We say that $p \in \Delta_{D -1}$ is a \textit{correlated equilibrium} if 
\begin{equation} H^{X^{(i)}}_{k,\ell}(p_{-i,k}) \geq 0 \label{eq: incentive constraints}\end{equation}
for all players $i \in [n]$ and all pure strategies $k, \ell \in [d_i]$. We denote $P_G \subseteq \Delta_{D-1}$ to be the set of all correlated equilibria.

A tuple $(x^{(1)}, \dots, x^{(n)}) \in \Delta_{d_1-1} \times \dots \times
\Delta_{d_n-1}$ is a \textit{Nash equilibrium} if $x^{(1)} \otimes \dots
\otimes x^{(n)} \in P_G$, and this Nash equilibrium is called {\em totally
mixed} if for all $i$, all entries of $x^{(i)}$ are (strictly)
positive. \end{definition}

The inequalities \eqref{eq: incentive constraints} in the definition are called the {\em incentive constraints} and
have the following interpretation. If an independent party draws $j \in
\prod_{i=1}^n [d_i]$ from the probability distribution $p\in\Delta_{D-1}$ and proposes
the pure strategy $j_i$ to each player $i$, then in the conditional
distribution obtained by scaling $p_{-i,j_i}$ appropriately, player $i$'s
expected payoff is at least as large when playing pure strategy $j_i$ as when playing any other pure strategy. Note that $P_G \subseteq \Delta_{D-1}$ is a convex
polytope, first studied in \cite[Section 4g]{Aumann87}, and that it is non-empty since it contains the set of Nash equilibria \cite{nash1950equilibrium}.

\begin{remark} \label{re:IncConsEq}
For future use, we make the following observation: if $p \in P_G$, $i \in
[n]$, and $k,\ell \in [d_i]$ are such that the slices $p_{-i,k}$ and $p_{-i,\ell}$
are both non-zero and scalar multiples of each other, then in fact the
incentive constraints $H^{X^{(i)}}_{k,\ell}(p_{-i,k}) \geq 0$ and
$H^{X^{(i)}}_{\ell,k}(p_{-i,\ell}) \geq 0$ both hold with equality.
\end{remark}

\begin{definition} \label{de:Generic}
Let $A_G$ be the coefficient matrix of the linear system of inequalities defining the cone spanned by $P_G \subseteq \RR^D$ consisting of both the incentive constraints \eqref{eq: incentive constraints} and the inequalities expressing that the entries of $p$
are non-negative. We say that the game $G=(n,S,X)$ is \textit{generic}, if it is generic
in the sense of \cite[Construction 5.3]{CombCorrEquilibria}, i.e.\ if all non identically zero $D \times D$-subdeterminants (maximal minors) of $A_G$ are non-zero for $X$. For any two games $G=(n,S,X)$ and $G'=(n,S,X')$ we write $G \sim G'$ if the sign tuple of those determinants ($0$, $+$, or $-$) are precisely
the same for $G'$ as for $G$.
\end{definition}

The collection of all regions where the maximal minors exhibit a given sign pattern is called the \textit{oriented matroid stratification}. Our notion of genericity above relies on this notion, which determines the combinatorial type of the polytope $P_G$, itself fully specified by the underlying oriented matroid. Note that genericity is an open condition, and if $G=(n,S,X)$ is
generic, then any $G'=(n,S,X')$ with $X'$ sufficiently close to $X$
satisfies $G \sim G'$. Throughout the paper, unless it is specified, a generic game means generic with respect to Definition~\ref{de:Generic}.

\begin{example}[Generic $2\times2$ games]
Let $G$ be a $2\times 2$ game. The coefficient matrix $A_G$ from Definition~\ref{de:Generic} has $8$ rows and $4$ columns. The first four rows come from the $\mathcal{H}$-representation of $P_G$, and the remaining four rows encode the non-negativity of $p_{ij}$ for $i,j \in [2]$.
\[
A_G=\begin{pmatrix}
X^{(1)}_{11} - X^{(1)}_{21} & X^{(1)}_{12} - X^{(1)}_{22} & 0 & 0 \\
0 & 0 & X^{(1)}_{21} - X^{(1)}_{11} & X^{(1)}_{22} - X^{(1)}_{12} \\
X^{(2)}_{11} - X^{(2)}_{12} & 0 & X^{(2)}_{21} - X^{(2)}_{22} & 0 \\
0 & X^{(2)}_{12} - X^{(2)}_{11} & 0 & X^{(2)}_{22} - X^{(2)}_{21} \\
1 & 0 & 0 & 0 \\
0 & 1 & 0 & 0 \\
0 & 0 & 1 & 0 \\
0 & 0 & 0 & 1
\end{pmatrix}
\]
There are in total $\binom{8}{4}=70$ maximal minors of this matrix, out of which $45$ are not identically zero. The algebraic boundary of the oriented matroid strata is given by the union of the varieties defined by these $45$ ($4 \times 4$)-minors. It consists of four linear irreducible components, each corresponding to the vanishing of one of the unknown entries of $A_G$:
\[
\bigl(
X^{(2)}_{21} - X^{(2)}_{22},\,
X^{(1)}_{12} - X^{(1)}_{22},\,
X^{(1)}_{11} - X^{(1)}_{21},\,
X^{(2)}_{11} - X^{(2)}_{12}
\bigr).
\]
In the complement of this algebraic boundary, we obtain the classes of generic games. By enumerating all possible sign patterns in this complement using \texttt{Mathematica}~\cite{combinatorics-correlated-equilibria}, we recover the result of~\cite{calvo2003set} that there are two types of generic $2 \times 2$ games: their correlated equilibrium set is either a single point or a $3$-dimensional polytope that is a bipyramid over a triangle. For example, the Bach or Stravinsky and Hawk or Dove games share the same sign (positive) pattern, and their correlated equilibrium polytopes are full-dimensional. In contrast, the Prisoners' Dilemma and Matching Pennies games have correlated equilibrium polytopes is a single point. In this smallest example, one can also see that the open regions of same sign pattern of the oriented matroid need not be connected. \QEDA
\end{example}

\begin{remark} Let $G$, $G'$ be generic games such that $G \sim G'$.
\label{RemarkGenericness}
\begin{enumerate}
    \item The face posets of $P_G$ and $P_{G'}$ are the same. Indeed, it
    is well known that the face poset of a polyhedral cone is determined
    by the oriented matroid defined by the rows of its coefficient matrix,
    and the face posets of the polytopes $P_G$ and $P_{G'}$ are readily
    obtained from those of the cones spanned by them.

    \item Let $i \in [n], k\in [d_i]$. If there exists a $p \in P_G$
    such that $p_{-i,k} \neq 0$, then there exists a $q \in P_{G'}$
    such that $q_{-i}(k) \neq 0$. This follows from the (proof of the)
    previous item.

    \item Let $\tilde{S}^{(i)} \subseteq S^{(i)}$ be a subset of pure
    strategies, $\tilde{S}:=\prod_{i=1}^n \tilde{S}^{(i)}$, and let $\tilde{G}$ (resp. $\tilde{G'}$) be the corresponding
    subgame of $G$ (resp. $G'$). Then $\tilde{G}$ and $\tilde{G'}$
    are generic games and $\tilde{G} \sim \tilde{G'}$. Indeed, set
    $\tilde{D}:=|\tilde{S}|$ and fix any $\tilde{D}
    \times \tilde{D}$-submatrix $\tilde{\delta}$ 
    of the coefficient matrix $A_{\tilde{G}}$. For each 
    $j \in S \setminus \tilde{S}$ add to $\delta$ the column corresponding to the
    variable $p_j$ and the row corresponding to the inequality $p_j
    \geq 0$. This yields a $D \times D$-submatrix of $A_G$ that, up to row and
    column permutations, equals
    \[ \delta=\begin{pmatrix} \tilde{\delta} & 0 \\ 0 & I_{D-\tilde{D}}
    \end{pmatrix}. \]
    Now the determinant of $\tilde{\delta}$ is (up to the sign
    corresponding to those permutations) equal to that of $\delta$.
    This implies both statements. 

\end{enumerate}    
\end{remark}

\noindent We can now formulate the main theorem. For a more precise statement, see Theorem~\ref{thm:MainTheorem2}.

\begin{theorem}
\label{MainTheorem}
Let $G$ be a generic game. If $P_G$ is not full-dimensional, then there is
a proper subgame $\tilde{G}$ of $G$ such that $P_{\tilde{G}}$ is full-dimensional
and the polytope $P_G$ is affinely isomorphic to $P_{\tilde{G}}$.
\end{theorem}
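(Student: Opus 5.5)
The plan is to identify the support of $P_G$ combinatorially and then use genericity to show that, on this support, nothing but the coordinate constraints cuts down the dimension. For each player $i$ let
\[
T^{(i)}:=\{k\in[d_i] : p_{-i,k}\neq 0 \text{ for some } p\in P_G\}.
\]
By convexity, some point $p^*$ in the relative interior of $P_G$ has $p^*_{-i,k}\neq 0$ for all $k\in T^{(i)}$ and all $i$. Every $p\in P_G$ is supported on $\tilde S:=\prod_i T^{(i)}$: if $p_j>0$ then $p_{-i,j_i}\neq 0$, so $j_i\in T^{(i)}$ for every $i$. Hence $P_G$ lies in the face $\Delta_{\tilde D-1}$ of $\Delta_{D-1}$ and is cut out there by two kinds of constraints:
\begin{enumerate}
\item the incentive constraints of the subgame $\tilde G$ on $\tilde S$;
\item the constraints $H^{X^{(i)}}_{k,\ell}(p_{-i,k})\ge 0$ with $k\in T^{(i)}$ and $\ell\notin T^{(i)}$.
\end{enumerate}
Constraints with $k\notin T^{(i)}$ are vacuous. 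So $P_G=P_{\tilde G}\cap\{\text{extra constraints}\}$, and the task splits into three steps.

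\textbf{Step 1 (dimension on the support).} Show that $P_G$ is either a single point or has dimension $\tilde D-1$. This is the analogue of Theorem~\ref{TheoremFullOrSingleton} applied to $p^*$, whose slices on $\tilde S$ are all nonzero. The argument uses genericity (Remark~\ref{RemarkGenericness}) to rule out any implicit equality among the incentive constraints other than forced ones. An implicit incentive equality $H_{k,\ell}(p_{-i,k})=0$ on all of $P_G$ is an affine condition satisfied on a face of dimension $\ge 1$. Together with the support equalities, this produces a vanishing maximal minor of $A_G$ that is not identically zero, which contradicts genericity.

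The only way such an equality can survive genericity is through a structural coincidence. The typical one is the situation of Remark~\ref{re:IncConsEq}, where two slices are proportional; that situation pins the slices down and, I expect, forces $P_G$ to be a singleton.

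\textbf{Step 2 (singleton case).} If $P_G$ is a singleton, choose any $1\times\cdots\times 1$ subgame. Its polytope is a point, which is full-dimensional in $\Delta_0$ and affinely isomorphic to $P_G$. The subgame is proper because $P_G$ is not full-dimensional, so $G$ itself is not $1\times\cdots\times1$.

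\textbf{Step 3 (full-dimensional case).} Here the goal is $P_G=P_{\tilde G}$, i.e.\ the extra constraints of type (2) must be redundant on $\Delta_{\tilde D-1}$; then $P_{\tilde G}=P_G$ is full-dimensional there. If $\tilde S=S$, then $P_G$ would be full-dimensional, contrary to hypothesis, so $\tilde G$ is proper. To get redundancy, suppose some extra constraint $H_{k,\ell}$ with $\ell\notin T^{(i)}$ is facet-defining for $P_G$. I would then deform $X$ slightly within the stratum of $G$ (allowed, since $G\sim G'$ preserves the face poset and the supports by Remark~\ref{RemarkGenericness}). The aim is to produce a correlated equilibrium of $G'$ that puts positive weight on strategy $\ell$ of player $i$. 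By Remark~\ref{RemarkGenericness}(2) this transfers back to $G$ and contradicts $\ell\notin T^{(i)}$.

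Concretely, on the facet where $H_{k,\ell}=0$, player $i$ is indifferent between $k$ and $\ell$. I would try to move a small amount of mass from the slice at $k$ to the slice at $\ell$: set the new $\ell$-slice to $\varepsilon p_{-i,k}$ and shrink the $k$-slice accordingly. By Remark~\ref{re:IncConsEq}-type reasoning, the incentive constraints for $k$ versus $\ell$ stay satisfied, and the remaining constraints involving $\ell$ can be checked at a point of the relative interior of the facet.

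\textbf{Main obstacle.} I expect Step 3 to be the hard part. Genericity alone does not obviously make the deviation constraints to unplayed strategies redundant. If the mass-shifting argument fails, the fallback is to change the subgame: replace $T^{(i)}$ by a different set of strategies so that $P_G$ becomes affinely isomorphic to, though not literally equal to, the polytope of that other subgame.
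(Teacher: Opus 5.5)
\textbf{There is a genuine gap in Step~1, and Step~3 depends on it.}

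Your genericity argument for Step~1 does not work. Definition~\ref{de:Generic} only controls maximal minors that are \emph{not identically zero}, and incentive rows satisfy identical linear relations. For example, in a $2\times 2$ game the four incentive rows of $A_G$ are linearly dependent for every $X$. So an implicit equality need not produce a forbidden vanishing minor. The ``structural coincidences'' you set aside are exactly where the work lies. Ruling them out is the content of Theorem~\ref{TheoremFullOrSingleton}, whose proof needs balancing, Lemma~\ref{LemmaSubgameAdd}, Proposition~\ref{prop:NotAllNash} and an induction on $r(p)$.

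Citing that theorem as a black box does not close the gap either. It cannot be applied to $G$, since $p^*_{-i,\ell}=0$ for $\ell\notin T^{(i)}$. Applied to $\tilde G$, it gives the dichotomy for $P_{\tilde G}$, not for $P_G=P_{\tilde G}\cap\{\text{constraints of type (2)}\}$. Nothing you say excludes that the deviations to unplayed strategies cut a full-dimensional $P_{\tilde G}$ down to a lower-dimensional $P_G$, and your Step~3 presupposes that they don't.

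The missing idea is balancing. By Proposition~\ref{PropositionBalancing} there is $G'\sim G$ for which $p^*$ is balanced. Since $\pi_{-i,\ell}(p^*)=0\neq\pi_{-i,k}(p^*)$ for $k\in T^{(i)}$, $\ell\notin T^{(i)}$, every type (2) constraint is then \emph{strict} at $p^*$. When $P_{\tilde G}$ is not a point, $P_{\tilde G'}$ is full-dimensional by Theorem~\ref{TheoremFullOrSingleton} and Remark~\ref{RemarkGenericness}(3). Mixing $p^*$ with a little of a relative-interior point of $P_{\tilde G'}$ gives a point of $P_{G'}$ at which every non-vacuous constraint is strict. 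Hence $P_{G'}$ has dimension $\tilde D-1$, and so does $P_G$, since it has the same face poset and the same sets $T^{(i)}$ by Remark~\ref{RemarkGenericness}. This is the point $q'$ in the paper's proof of Theorem~\ref{thm:MainTheorem2}.

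\textbf{With Step~1 repaired, your Step~3 goes through, by a different route from the paper's.} The paper takes $p\in P_{\tilde G}\setminus P_G$ and a best response $j\notin T^{(i)}$ to $p_{-i,m}$, and mixes the interior point with a tensor concentrated on slice $j$. Your facet-wise mass shift needs no deformation of $X$ at all. But you must supply the genericity input you left implicit.

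The constraints at risk are not ``those involving $\ell$''. Player $i$'s constraints are fine, because $\ell$ ties with the best response $k$. The risk is for players $i'\neq i$, whose slices move by $O(\varepsilon)$, so any of their constraints that is tight could break. You therefore need that at a relative-interior point of the facet $\{H^{X^{(i)}}_{k,\ell}=0\}$ no other non-vacuous constraint is tight. This follows from genericity in two steps:
\begin{enumerate}
\item Every structurally nonzero entry of $A_G$ is nonzero: use a minor with one incentive row and $D-1$ unit rows.
\item No two distinct rows are proportional on the columns of $\tilde S$: use a $2\times 2$ minor completed by $D-2$ unit rows.
\end{enumerate}
The only exception is when $H^{X^{(i)}}_{k,\ell}$ restricted to $\tilde S$ is a multiple of a single coordinate $p_j$. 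By full-dimensionality it is then a positive multiple, and hence redundant.

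The shifted mass sits at positions with $j_{i'}\in T^{(i')}$, so vacuous constraints stay vacuous. You then get $p'\in P_G$ with $p'_{-i,\ell}\neq 0$, a contradiction. Since a full-dimensional polytope is cut out by its facet inequalities, $P_G=P_{\tilde G}$.

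Step~2 is fine, and it handles the singleton case more explicitly than the paper does.
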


\section{Proof of the main theorem}
Let $n$ be a positive integer and let $d_i \geq 2$ be integers for any
$i \in [n]$. Let $G=(n,S,X)$ be a $(d_1\times \dots \times
d_n)$-game. 

\begin{definition}
Let $p \in \Delta_{D -1}$. For any $i \in [n]$ and $k \in [d_i]$,
define the polytope \[ \mathcal{C}^{X^{(i)}}_k:=  \{ q \in
\Delta_{D/d_i-1} \mid  H^{X^{(i)}}_{k,\ell}(q)  \geq 0 \ \text{for all} \ \ell \in [d_i] \} \subseteq \Delta_{D/d_i-1}. \]   Moreover, we define the projection map \[ \pi_{-i,k}: \Delta_{D -1}  \to  \Delta_{D/d_i-1} \cup \{0\}, \ p \mapsto \begin{cases}
    0 & \ \text{if} \ p_{-i,k}=0, \\
     \frac{p_{-i,k}}{\mathbf{1}^t p_{-i,k} } & \ \text{else}, 
\end{cases}   \] where $\mathbf{1} \in \RR^{D/d_i}$ is the vector where all entries are equal to $1$. 
\end{definition}

Note that $\pi_{-i,k}$ sends a probability distribution $p$ to (zero
or) the conditional probability distribution represented by the $k$-th
slice of $p$ in the $i$-th direction; and that if $p \in P_G$, then
$\pi_{-i,k}(p) \in \mathcal{C}^{X^{(i)}}_k \cup \{0\}.$

\begin{remark}
Let $i \in [n]$ and $k \in [d_i]$. The polytope $\mathcal{C}^{X^{(i)}}_k$ has the following interpretation:

\noindent For any $j \in [n] \backslash \{i\}$, assume that player $j$ chooses a mixed strategy $x^{(j)} \in \Delta_{d_j-1}$. Then \[ x^{(1)} \otimes \dots 
\otimes x^{(i-1)} \otimes x^{(i+1)} \otimes \dots \otimes x^{(n)} \in \mathcal{C}^{X^{(i)}}_k \] if and only if the choice of the $k$-th pure strategy for player $i$ maximizes their own payoff. As such, the polytope $\mathcal{C}^{X^{(i)}}_k$ is also known as a \textit{best-response region}, which appears in the Lemke-Howson algorithm for computing Nash equilibria of bimatrix games; see also \cite{FindingNashEquilibria}.
\end{remark}

\begin{definition}
We say that $p \in P_G$ is \textit{balanced} (for $G$) if and only if
for all $i \in [n]$ and all $k, \ell \in [d_i]$ the implication 
\[ \pi_{-i,k}(p) \in \mathcal{C}^{X^{(i)}}_{\ell} \Rightarrow \pi_{-i,k}(p)=\pi_{-i,\ell}(p) \in \mathcal{C}^{X^{(i)}}_{k} \cap \mathcal{C}^{X^{(i)}}_{\ell}  \]     
holds. Note that the opposite implication always holds by
Remark~\ref{re:IncConsEq}.
\end{definition}

For instance, if $(x^{(1)},\dots,x^{(n)})$ is a totally mixed Nash
equilibrium for $G$, then $p:=x^{(1)} \otimes \dots \otimes x^{(n)} \in
P_G$ is balanced, since for every $i$, all slices of $p$ in the $i$-th
direction are non-zero scalar multiples of $x^{(1)} \otimes \dots \otimes
x^{(i-1)} \otimes x^{(i+1)} \otimes \dots \otimes x^{(n)}$ (see Remark~\ref{re:IncConsEq}).
\begin{example}\label{ex: 3x3 game balanced point}
Consider the following generic $(3 \times 3)$-game $G$ with the payoff matrices \[ (X^{(1)},X^{(2)})= \begin{pmatrix}
    (10,12) & (15,10) & (11,13) \\
    (5,9) & (4,15) & (19,8) \\
    (18,18) & (9,17) & (13,10)
\end{pmatrix}.  \]
The correlated equilibrium polytope $P_G \subset \Delta_8$ has the f-vector $(79, 373, 774, 924, 698, 338, 100, 16)$ and in particular it is full-dimensional. Its unique vertex $(\frac{7}{10}, \frac{17}{70}, \frac{2}{35}) \otimes (\frac{13}{109},\frac{37}{109}, \frac{59}{109})$ in the interior of $\Delta_8$ is the unique totally mixed Nash equilibrium of the game and in particular it is balanced. Now consider the following point which lies in the interior of an edge of $P_G$, \[p:=\frac{1}{200}\begin{pmatrix}
   0 & 56 & 77 \\
   0 & 24 & 33 \\
   10 & 0 & 0 
\end{pmatrix} = \frac{19}{20} \begin{pmatrix}
   0 & \frac{28}{95} & \frac{77}{190} \\
   0 & \frac{12}{95} & \frac{33}{190} \\
   0 & 0 & 0 
\end{pmatrix} + \frac{1}{20} \begin{pmatrix}
   0 & 0 & 0 \\
   0 & 0 & 0 \\
   1 & 0 & 0 
\end{pmatrix} \in P_G.  \]
We obtain that $p_{-1,k} \in \mathcal{C}^{X^{(1)}}_{\ell}$ for $k,\ell \in \{1,2\}$ and $p_{-2,k} \in \mathcal{C}^{X^{(1)}}_{\ell}$ for $k \in \{2,3\}$, and these are the only non-trivial statements of this form. Moreover, we have that $\pi_{-1,1}(p) = \pi_{-1,2}(p)$ and $\pi_{-2,2}(p) = \pi_{-2,3}(p)$ respectively, showing that $p$ is balanced. 
\QEDA
\end{example}

The following statement shows that we can always slightly perturb the payoff tensors to make any given point in the correlated equilibrium polytope $P_G$ balanced.
\begin{proposition}
\label{PropositionBalancing}
Let $G=(n,S,X)$ and let $p \in P_G$. Then there
exist $Y$ arbitrarily close to $X$ such that the game $G':=(n,S,Y)$
satisfies $p \in P_{G'}$ and such that $p$ is balanced for $G'$.
\end{proposition}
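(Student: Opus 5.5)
The plan is to perturb each payoff tensor $X^{(i)}$ separately, and only along the directions that affect player $i$'s incentive constraints at the single point $p$. The key observation is that the condition $\pi_{-i,k}(p) \in \mathcal{C}^{X^{(i)}}_\ell$ depends only on the differences $X^{(i)}_{\cdot,k,\cdot}-X^{(i)}_{\cdot,\ell',\cdot}$, evaluated against the one vector $q_k:=\pi_{-i,k}(p)$. So we can fix a player $i$ and work one slice index $k$ at a time. We may assume $p_{-i,k}\neq 0$; otherwise $\pi_{-i,k}(p)=0$ and the implication is vacuous, since $0 \notin \mathcal{C}^{X^{(i)}}_\ell$ because $0\notin\Delta$.

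Write $u_m:=X^{(i)}_{\cdot,m,\cdot}\in\RR^{D/d_i}$ for the payoff slice of strategy $m$. Then $q\in\mathcal{C}^{X^{(i)}}_\ell$ means $\langle u_\ell,q\rangle\ge \langle u_m,q\rangle$ for all $m$, that is, $\ell$ is a best response to $q$. Since $p\in P_G$, strategy $k$ is a best response to $q_k$. Let $B$ be the set of best responses to $q_k$, so $k\in B$. For each $\ell\in B$ with $q_\ell:=\pi_{-i,\ell}(p)\neq q_k$, we want to destroy the tie. We do this by lowering $\langle u_\ell,q_k\rangle$ slightly, while keeping $\ell$ a best response at $q_\ell$ (if $p_{-i,\ell}\ne0$) and keeping every other required best-response relation intact.

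The tool is a small perturbation $u_\ell\mapsto u_\ell-\varepsilon w$, with $w\in\RR^{D/d_i}$ chosen so that $\langle w,q_k\rangle>0$ and $\langle w,q_{k'}\rangle=0$ for every slice $q_{k'}$ ($k'\in[d_i]$, $p_{-i,k'}\neq0$) that is not equal to $q_k$. This requires $q_k$ not to lie in the span of the distinct other slices. That can fail, so it is the main obstacle. The $q$'s are probability vectors, so a nonnegative-coefficient dependence is impossible among distinct ones only in special cases; general linear dependencies can occur once $D/d_i$ is small relative to $d_i$.

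If it fails, I would fall back on a weaker requirement. We only need to preserve the inequalities $\langle u_{k'},q_{k'}\rangle\ge\langle u_m,q_{k'}\rangle$ that hold as equalities, plus the equalities that balancing demands. Strict inequalities survive any small perturbation. So we need $\langle w,q_{\ell}\rangle\le 0$, which keeps $\ell$ optimal at its own slice, and $\langle w,q_{k'}\rangle\ge 0$ for those $k'$ at which $\ell$ is currently tied with $k'$. The tied slices where $\ell$ is not the slice index themselves require breaking, so their sign is even desirable. Hence we need $w$ with $\langle w,q_k\rangle>0$, $\langle w,q_\ell\rangle\le0$, and $\langle w, q_{k'}\rangle\ge 0$ at the other tie slices. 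Since $q_k\ne q_\ell$ are distinct points of the simplex, a separating functional exists: take $w$ as the coordinate indicator of a coordinate where $q_k>q_\ell$, minus a suitable multiple of $\mathbf 1$. Then the remaining tie constraints can be made strict simultaneously by choosing $w=\mathbf 1_{\{q_k>q_\ell\}}$-type functionals. I expect the delicate part to be this simultaneous sign bookkeeping when several slices coincide or are tied.

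An alternative, cleaner route that I would try first is a single global perturbation: replace $u_m$ by $u_m+\varepsilon\, c\,q_m$ for each $m$ with $p_{-i,m}\neq0$, with $c>0$. At $q_m$ this raises the payoff of $m$ by $\varepsilon c\|q_m\|^2$. It raises the payoff of a competitor $\ell$ by only $\varepsilon c\langle q_\ell,q_m\rangle$, which is strictly less than $\|q_m\|^2$ unless $q_\ell=q_m$ (this needs $\|q_\ell\|=\|q_m\|$ and the equality case of Cauchy–Schwarz). Hence every tie at $q_m$ with a strategy whose slice differs is broken strictly in favour of $m$, while ties between identical slices remain ties. Strict inequalities persist for $\varepsilon$ small, so $p\in P_{G'}$.

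The Cauchy–Schwarz inequality is only $\langle q_\ell,q_m\rangle\le\|q_\ell\|\|q_m\|$, so I would first normalize each slice to unit Euclidean length, using $\hat q_m=q_m/\|q_m\|$; the strict inequality $\langle \hat q_\ell,\hat q_m\rangle<1$ for $\hat q_\ell\neq \hat q_m$ then suffices. Strategies with zero slice are unperturbed and are ties only for nonzero slices, where they lose. Doing this for every player $i$ independently gives $Y$, and $p$ is balanced for $G'=(n,S,Y)$.
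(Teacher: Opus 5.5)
Your final construction, the normalized global perturbation, is correct, and it takes a genuinely different route from the paper's proof.

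With $u_m$ the $m$-th payoff slice of $X^{(i)}$, replace $u_m$ by $u_m+\varepsilon\hat q_m$ for every $m$ with $p_{-i,m}\neq 0$. Then $H_{k,\ell}(p_{-i,k})$ changes by $\varepsilon\,(\mathbf 1^t p_{-i,k})\,\|q_k\|\,(1-\langle\hat q_\ell,\hat q_k\rangle)$, where $\hat q_\ell:=0$ if $p_{-i,\ell}=0$. This change is $0$ if $q_\ell=q_k$ and strictly positive otherwise, since positively proportional points of the simplex coincide. That gives $p\in P_{G'}$ and balancedness at once.

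The paper's proof works differently:
\begin{itemize}
\item It enumerates the distinct normalized slices $w_1,\dots,w_m$ by repeatedly removing a vertex of the convex hull.
\item It chooses functionals $h_j$ with $h_j(w_j)>0>h_j(w_{j'})$ for $j'>j$.
\item It adds $\varepsilon_j h_j$ to the payoff slices of the $j$-th class in $m$ successive steps.
\item Each $\varepsilon_j$ must be small enough to keep the strict inequalities created earlier. This is needed because a constraint of an earlier class against the current one can decrease by $\varepsilon_j h_j(p_{-i,k})$.
\end{itemize}

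Your version replaces this lexicographic bookkeeping with one explicit perturbation, and strictness comes from the equality case of Cauchy--Schwarz. A further gain is that no incentive constraint at $p$ ever decreases. So $p\in P_{G'}$ and balancedness hold for every $\varepsilon>0$, and smallness is needed only for closeness to $X$. The paper's separating-functional device uses only the affine geometry of the slices. It is also the tool the paper reuses later, for instance the tilt by $h$ with $h(p_{-1,1})=0>h(q_{-1,1})$ in the proof of Theorem~\ref{TheoremFullOrSingleton}.

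Two remarks on your write-up. First, drop the exploratory first approach (orthogonal $w$, then the sign-bookkeeping fallback): it is incomplete as written and not needed. Second, the normalization is essential. Without it, $q_m=(0.6,0.4)$ and $q_\ell=(1,0)$ give $\langle q_\ell,q_m\rangle=0.6>0.52=\|q_m\|^2$, so a tight constraint would become violated.
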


\begin{proof}
Fix $i \in [n]$. We are going to perturb $X^{(i)}$; we can do this
independently for all $i$ to obtain the required tuple $Y$.  Consider the
finite set
\[ W:=\{\pi_{-i,k}(p) \mid p_{-i,k} \neq 0, k \in [d_i]\} \subseteq
\Delta_{D/d_i-1} \]
of normalized non-zero slices of $p$ in the $i$-th direction. Pick a
vertex $w_1 \in W$ of the convex hull of $W$, then a vertex $w_2 \in W
\setminus \{w_1\}$ of the convex hull of $W \setminus \{w_1\}$, and so
on; this gives an enumeration $W=\{w_1,\ldots,w_m\}$ with the property
that for each $j \in [m]$, there exists a linear function $h_j \in
(\RR^{D/d_i})^*$ with $h_j(w_j)>0$ and $h_j(w_{j'})<0$ for all $j'>j$.
Now initialize $Y^{(i)}:=X^{(i)}$; we will update $Y^{(i)}$ in $m$
steps.

In the first step, we choose a sufficiently small $\varepsilon_1 > 0$ 
and for $k \in [d_i]$ update the $k$-th slice in the $i$-th direction $Y^{(i)}_k$ via 
\[Y^{(i)}_k:=\begin{cases}
	Y^{(i)}_k + \varepsilon_1 h_1 & \text{if } \pi_{-i,k}(p)=w_1, \text{ and}\\
	Y^{(i)}_k  & \text{otherwise}. 
	\end{cases}
\]
Since $\varepsilon_1$ is sufficiently small, this preserves the incentive
constraints $H^{Y^{(i)}}_{k,\ell}(p_{-i,k}) \geq 0$ that hold with a strict
inequality. Assume that $k,\ell \in [d_i]$ are such that this constraint holds
with equality. Now:
\begin{itemize}
\item if $\pi_{-i,k}(p),\pi_{-i,\ell}(p) \neq w_1$,
then all terms in the constraint remain unchanged, so that it continues
to hold with equality;
\item if $\pi_{-i,k}(p)=\pi_{-i,\ell}(p)=w_1$, then the $+$ terms in the
incentive constraint increase by $\varepsilon_1 h_1(p_{-i,k})>0$ and the $-$
terms decrease by the same amount, so again the incentive constraint
continues to hold with equality;
\item if $\pi_{-i,k}(p)=w_1\neq
\pi_{-i,\ell}(p)$, then the $+$ terms increase by
$\varepsilon_1 h_1(p_{-i,k})>0$ while the $-$ terms remain unchanged, so the
incentive constraint holds with a strict inequality after the update;
and 
\item if $\pi_{-i,k}(p) \neq w_1=\pi_{-i,\ell}(p)$, then the
$+$ terms remain the same, while in the $-$ terms we additionally
subtract $\varepsilon_1 h_1(p_{-i,k}) \leq 0$, so the contraint remains
valid. Moreover, if $p_{-i,k} \neq 0$,
then in fact $\pi_{-i,k}(p)=w_j$ for some $j>1$, and hence
$h_1(p_{-i,k})<0$, so the constraint holds with a strict inequality
after the update. 
\end{itemize}
Summarising, after this update all incentive constraints remain valid,
and in fact we achieve that $H^{Y^{(i)}}_{k,\ell}(p_{-i,k}) > 0$ if $p_{-i,k}$
is non-zero and moreover precisely one of $k,\ell$ has the property that
the corresponding slice is a non-zero scalar multiple of $w_1$.

In the remaining $m-1$ steps, we proceed in exactly the same manner,
but now with $w_j$ and $h_j$ for $j=2,\ldots,m$, taking care to
choose the new $\varepsilon_j$ small enough so that strictly satisfied
incentive constraints remain strictly satisfied. Afterwards, we
then have $H^{(i)}_{k,\ell}(p_{-i,k})>0$ for all $k,\ell \in [d_i]$
with $p_{-i,k} \neq 0$ and $\pi_{-i,k}(p) \neq \pi_{-i,\ell}(p)$
(this also holds when $\pi_{-i,\ell}(p)=0$). Since
$H^{(i)}_{k,\ell}=-H^{(i)}_{\ell,k}$, this
implies that $\pi_{-i,k}(p) \not \in \mathcal{C}^{X^{(i)}}_\ell$ for
such $k,\ell$. Doing this for all $i$, we find $Y$, arbitrarily near $X$,
such that $p$ is balanced for the game $(n,S,Y)$.
\end{proof}

\begin{example}
    Consider the game $G$ of Bach or Stravinsky with the payoff matrices \[
(X^{(1)},X^{(2)})=
\begin{pmatrix}
(3,2) & (0,0) \\
(0,0) & (2,3)
\end{pmatrix}.
\]
The correlated equilibrium polytope $P_G$ is three-dimensional, which is a bipyramid over a triangle. The vertex $p$ =  $\begin{pmatrix}
\frac{2}{7} & \frac{3}{7} \\
0 & \frac{2}{7} 
\end{pmatrix} \in  P_G \subsetneq \Delta_3$ is not balanced, since e.g.\ $\pi_{-1,1} \in \mathcal{C}_2^{X^{(1)}}$ but $\pi_{-1,1}(p) \neq \pi_{-1,2}(p)$. By the proof of Proposition~\ref{PropositionBalancing}, we can obtain the following perturbed game $G'$ such that $G \sim G'$.
\[
(Y^{(1)},Y^{(2)})=
\begin{pmatrix}
\left(5,\,3\right)
&
\left(-1,\,0\right)
\\
\left(0,\,-{2}\right)
&
\left(2,\,3\right)
\end{pmatrix}.
\]
In particular, $P_G$ and $P_{G'}$ have the same face poset, and $p \in P_{G'}$ is balanced for $G'$.
\QEDA
\end{example}

\noindent In the proof of Proposition~\ref{PropositionBalancing}, we have grouped together the slices of $p$ in
the $i$-th direction that are positive scalar multiples of the same
$w_j$. We will use this partition more extensively below and introduce
the following notation.

\begin{definition} \label{de:Partition}
Let $p \in \Delta_{D-1}$. For each $i \in [n]$, we define an
equivalence relation on $[d_i]$ where $k \sim \ell$ if and only if
$\pi_{-i,k}(p)=\pi_{-i,\ell}(p)$. We let \[M^{(i)}_1 \dot{\cup}
\dots \dot{\cup} M^{(i)}_{m_i} = [d_i]\] be the associated set
partition and set $d^{(i)}_a:=|M^{(i)}_a|$ for $a \in [m_i]$.

\noindent We define 
\begin{equation} \label{eq:rp} r(p):=\sum_{i=1}^n |\{ \pi_{-i,k}(p)
\mid p_{-i,k} \neq 0, k \in [d_i]   \}|; \end{equation}
note that the $i$-th term here is precisely the cardinality of the set
$W$ in the proof of Proposition~\ref{PropositionBalancing}.
\end{definition}
\begin{remark}
\label{RemarkR}
Let $p \in \Delta_{D-1}$.
\begin{enumerate}
    \item It holds that \[r(p) = \left( \sum_{i=1}^n m_i \right) - |\{i \in [n] \mid p_{-i,k}=0 \ \text{for some} \ k \in [d_i]   \}  |.  \]
    \item It holds that $n \leq r(p) \leq \sum_{i=1}^n d_i$, with
    $r(p)=n$ if and only if $p=(x^{(1)} \otimes \dots \otimes
    x^{(n)})$, where $(x^{(1)}, \dots, x^{(n)})$ is a Nash equilibrium
    of $G$. Indeed, in the latter case, all non-zero slices of $p$ in
    the $i$-th direction are positive scalar multiples of $x_1 \otimes
    \cdots \otimes x_{i-1} \otimes x_{i+1} \otimes \cdots \otimes x_n$, so each $i \in [n]$
    contributes $1$ to the sum in \eqref{eq:rp}. Conversely, if each $i$
    contributes $1$, then it follows that in each direction, the
    non-zero slices of $p$ are positive scalar multiples of each other,
    so that $p$ has rank one as claimed. 
    \item Let $q \in \Delta_{D-1}$ and let $p':=(1- \varepsilon)p +
    \varepsilon q \in \Delta_{D-1}$ for some $0< \varepsilon <1$.
    Then for $\varepsilon$ small enough, if the $k$-th and $\ell$-th
    slice of $p$ in the $i$-th direction are linearly independent, then 
    the same holds for $p'$, hence $r(p') \geq r(p)$. 
    \item Let $a_1 \in [m_1], \dots, a_n \in [m_n]$ be indices and
    consider \[ \tilde{p}:=(p_{j_1, \dots, j_n})_{j_i \in
    M^{(i)}_{a_i}, i \in [n]}.\] Then $\tilde{p}=0$ if 
    for at least one $i \in [n]$, we have $\pi_{-i,k}(p)=0$ for some,
    and hence all, $k \in M_{a_i}^{(i)}$. If that is not the case,
    then in each direction all slices of $\tilde{p}$ 
    are positive scalar multiples of each other. This implies
    that $\tilde{p}=y^{(1)} \otimes \cdots \otimes y^{(n)}$ for
    certain vectors $y^{(i)} \in \RR^{M^{(i)}_{a_i}}$ with 
    positive entries. 
\end{enumerate}
\end{remark}

\begin{lemma}
\label{LemmaSubgameAdd}
Let $p \in P_G$ be balanced. Let $a_1 \in [m_1], \dots, a_n \in [m_n]$ such that $p_{-i,k} \neq 0$ for any $i \in [n]$, $k \in M^{(i)}_{a_i}$ and
let $\tilde{G}=(n,\tilde{S},\tilde{X})$ be the subgame of $G$
corresponding to the subsets $M_{a_i}^{(i)} \subseteq S^{(i)}$. Let
$\tilde{q} \in P_{\tilde{G}}$ be a correlated equilibrium, and extend
$\tilde{q}$ to a tensor $q \in \Delta_{D-1}$ by assigning $0$ to any
tuple of pure strategies not in $\tilde{S}$. Then there exists $0<\varepsilon<1$
such that $(1-\varepsilon)p + \varepsilon q \in P_G$.
\end{lemma}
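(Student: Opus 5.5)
Since $P_G$ is cut out by linear inequalities and $p,q$ both lie in $\Delta_{D-1}$, it suffices to check each incentive constraint $H^{X^{(i)}}_{k,\ell}\bigl((1-\varepsilon)p_{-i,k}+\varepsilon q_{-i,k}\bigr)\ge 0$ separately. By linearity this equals $(1-\varepsilon)H^{X^{(i)}}_{k,\ell}(p_{-i,k})+\varepsilon H^{X^{(i)}}_{k,\ell}(q_{-i,k})$. Our plan is to sort the pairs $(i,k,\ell)$ by whether the first term is strictly positive or zero, and to use balancedness of $p$ to handle the tight ones.

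First, if $k\notin M^{(i)}_{a_i}$, then $q_{-i,k}=0$ by construction, so the constraint reads $(1-\varepsilon)H^{X^{(i)}}_{k,\ell}(p_{-i,k})\ge 0$, which holds for every $\varepsilon$. Next, suppose $k\in M^{(i)}_{a_i}$, so that $p_{-i,k}\neq 0$ by hypothesis. If $H^{X^{(i)}}_{k,\ell}(p_{-i,k})>0$, the constraint persists for all sufficiently small $\varepsilon$. There are only finitely many constraints, so a single $\varepsilon$ works for all of these.

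The remaining case, which carries the real content, is $k\in M^{(i)}_{a_i}$ with $H^{X^{(i)}}_{k,\ell}(p_{-i,k})=0$. In that case $\pi_{-i,k}(p)\in\mathcal{C}^{X^{(i)}}_{\ell}$, because the constraints $H_{k,\ell'}$ for $\ell'\neq\ell$ follow from $p\in P_G$ together with $H_{k,\ell}-H_{k,\ell'}=H_{\ell',\ell}$-type identities. More precisely, $H^{X^{(i)}}_{\ell,\ell'}(q)=H^{X^{(i)}}_{k,\ell'}(q)-H^{X^{(i)}}_{k,\ell}(q)$ when all are viewed as linear forms on the same slice space. Balancedness then gives $\pi_{-i,k}(p)=\pi_{-i,\ell}(p)$, so $\ell\sim k$, and hence $\ell\in M^{(i)}_{a_i}$ as well. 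Now the needed inequality $H^{X^{(i)}}_{k,\ell}(q_{-i,k})\ge 0$ concerns only $k,\ell\in\tilde S^{(i)}=M^{(i)}_{a_i}$ and the slice $q_{-i,k}$. That slice is supported on $\prod_{i'\neq i}\tilde S^{(i')}$, where it coincides with $\tilde q_{-i,k}$. So the inequality is exactly the incentive constraint of $\tilde G$ satisfied by $\tilde q\in P_{\tilde G}$, and the constraint holds with the $\varepsilon$ term nonnegative.

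We expect the main obstacle to be the step from a tight constraint $H_{k,\ell}(p_{-i,k})=0$ to membership $\pi_{-i,k}(p)\in\mathcal{C}^{X^{(i)}}_\ell$. This requires checking the identity $H^{X^{(i)}}_{\ell,\ell'}=H^{X^{(i)}}_{k,\ell'}-H^{X^{(i)}}_{k,\ell}$ as linear forms on $\RR^{D/d_i}$, with the slice index treated as the argument rather than fixed. We would verify this carefully from the definition, reading $H_{k,\ell}(q)=\sum_j (X_{\cdot,k,\cdot}-X_{\cdot,\ell,\cdot})_j q_j$. Under that reading, $\pi_{-i,k}(p)$ maximizes the payoff $\ell'\mapsto\sum_j X_{\cdot,\ell',\cdot,j}\,q_j$ at $k$, and a tie at $\ell$ means $\ell$ also maximizes. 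Everything else is routine.
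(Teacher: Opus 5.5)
Your proof is correct and is essentially the paper's argument. The paper splits on whether $\ell\in M^{(i)}_{a_i}$, using $\tilde q\in P_{\tilde G}$ in one case and strict positivity from balancedness in the other, whereas you split on whether the constraint for $p$ is tight, which is the contrapositive of the same use of balancedness; your explicit verification via $H^{X^{(i)}}_{\ell,\ell'}=H^{X^{(i)}}_{k,\ell'}-H^{X^{(i)}}_{k,\ell}$ that tightness gives $\pi_{-i,k}(p)\in\mathcal{C}^{X^{(i)}}_\ell$, and your treatment of $k\notin M^{(i)}_{a_i}$, fill in steps the paper leaves implicit.
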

\begin{proof}
Let $i \in [n]$, $k \in M_{a_i}^{(i)}$ and $\ell \in [d_i]$. We show that there exists an $0<\varepsilon^{(i)}_{k,\ell} <1$ such that \[
    H^{X^{(i)}}_{k,\ell} \left(
    (1-\varepsilon^{(i)}_{k,\ell})p_{-i,k} +
    \varepsilon^{(i)}_{k,\ell} q_{-i,k}  \right) \geq 0. 
\]

\noindent First assume that $\ell \in M_{a_i}^{(i)}$. Now, since $\tilde{q} \in
P_{\tilde{G}}$, it follows that $H^{X^{(i)}}_{k,\ell}(q_{-i,k}) \geq 0$. Moreover, $p \in P_G$, so also $H^{X^{(i)}}_{k,\ell}(p) \geq 0$ and we are done in this case.

So assume now that $\ell \not\in M_{a_i}^{(i)}$. Since $p$ is balanced
and $p_{-i,k} \neq 0$, we have $H^{X^{(i)}}_{k,\ell}(p_{-i,k}) > 0$. By then choosing $\varepsilon^{(i)}_{k,\ell}$ small enough, we can thus guarantee that  \[ H^{X^{(i)}}_{k,\ell} \left( (1-\varepsilon^{(i)}_{k,\ell})p + \varepsilon^{(i)}_{k,\ell} q  \right) \geq 0. \]

\noindent Finally, we set \[ \varepsilon:= \mathrm{min} \{ \varepsilon^{(i)}_{k,\ell} \mid i \in [n], k \in M_{a_i}^{(i)}, \ell \in [d_i]   \}\] and we are done.
\end{proof}

\begin{corollary}
\label{CorollaryNonZero}
Assume there exists a $p \in P_G$ such that $p_{-i,k} \neq 0$ for any
$i \in [n], k \in [d_i]$. Then there exist $X'$ arbitrarily close to $X$
such that the game $G':=(n,S,X')$ has a $p' \in P_{G'}$ all of whose
entries are positive and that satisfies $r(p') \geq r(p)$.
\end{corollary}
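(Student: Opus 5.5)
The plan is to first balance $p$, then iteratively add mass on the blocks $M^{(1)}_{a_1}\times\dots\times M^{(n)}_{a_n}$ using Lemma~\ref{LemmaSubgameAdd}, until every entry is positive.

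\emph{Step 1: balancing.} By Proposition~\ref{PropositionBalancing} there is $X'$ arbitrarily close to $X$ with $p \in P_{G'}$ and $p$ balanced for $G'=(n,S,X')$. The slices of $p$ are unchanged, so the hypothesis $p_{-i,k}\neq 0$ for all $i,k$ still holds. The partition $M^{(i)}_1\dot\cup\dots\dot\cup M^{(i)}_{m_i}$ and the quantity $r(p)$ depend only on $p$, not on the payoffs.

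\emph{Step 2: filling the blocks.} Since all slices of $p$ are non-zero, every tuple $(a_1,\dots,a_n)\in[m_1]\times\dots\times[m_n]$ satisfies the hypothesis of Lemma~\ref{LemmaSubgameAdd}. For each such tuple, let $\tilde G_{a}$ be the subgame on $\prod_i M^{(i)}_{a_i}$.
\begin{itemize}
\item $P_{\tilde G_a}$ is non-empty, since it contains a Nash equilibrium of $\tilde G_a$. That alone need not give positive entries.
\item Better: by Remark~\ref{RemarkR}(4), the block $\tilde p$ of $p$ is a positive rank-one tensor $y^{(1)}\otimes\dots\otimes y^{(n)}$. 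After normalising, it is a totally mixed Nash equilibrium of $\tilde G_a$, because within a block all slices are proportional and Remark~\ref{re:IncConsEq} forces the incentive constraints to hold with equality.
\item So instead I take $\tilde q$ to be the uniform distribution, or any point with full support on the block, and argue it lies in $P_{\tilde G_a}$ after a further perturbation of the payoffs. The cleanest version: perturb $X'$ inside each block so that the incentive constraints of $\tilde G_a$ between strategies of the same class become vacuous. Concretely, make $X'^{(i)}$ restricted to the block constant in the $i$-th index along $M^{(i)}_{a_i}$.
\end{itemize}
This last perturbation looks too large. The alternative is to apply Lemma~\ref{LemmaSubgameAdd} with $\tilde q$ the normalised block $\tilde p$ itself. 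That adds nothing new in support, but combined with the next step it suffices.

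\emph{Step 3: supports outside the blocks.} The real issue is entries $p_j$ with $j$ in a block where $p$ already vanishes. By Remark~\ref{RemarkR}(4), if no slice is zero then $\tilde p$ is a tensor with positive entries. Hence $p$ is already positive on every block $\prod_i M^{(i)}_{a_i}$. These blocks cover all of $S$, since the $M^{(i)}_a$ partition each $[d_i]$. So after balancing, $p$ itself has all entries positive, and we may take $p'=p$ with $r(p')=r(p)$.

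The main obstacle, and the step to verify carefully, is this Remark~\ref{RemarkR}(4) argument: nonzero proportional slices within a block indeed force every block entry positive. I would check this by induction on $n$, using that each full slice $p_{-i,k}$ restricted to other blocks is a positive multiple of a common vector. If that check reveals zero entries, the fallback is to choose $\tilde q$ as a totally mixed correlated equilibrium of the block, after perturbing to make it exist. Then apply Lemma~\ref{LemmaSubgameAdd} block by block, with Remark~\ref{RemarkR}(3) guaranteeing $r$ does not decrease for small $\varepsilon$.
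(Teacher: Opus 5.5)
\textbf{Gap in Step~3.} Your Step~3 is where the argument breaks, and its claim is false. For $k,\ell$ in one class $M^{(i)}_{a_i}$, the full slices $p_{-i,k},p_{-i,\ell}$ are positive multiples of each other, so their restrictions to a block $\prod_{i'}M^{(i')}_{a_{i'}}$ are too. But these restrictions can be zero even though the full slices are not. What Remark~\ref{RemarkR}(4) actually gives is a dichotomy: each block is either identically zero or an entrywise positive rank-one tensor. Non-vanishing of all slices does not exclude the first case.

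The paper's own Example~\ref{ex: 3x3 game balanced point} is a counterexample. There $p=\frac{1}{200}\left(\begin{smallmatrix}0&56&77\\0&24&33\\10&0&0\end{smallmatrix}\right)\in P_G$ is balanced and has no zero row or column. Yet with $M^{(1)}_1=\{1,2\}$ and $M^{(2)}_1=\{1\}$, the blocks $\{1,2\}\times\{1\}$ and $\{3\}\times\{2,3\}$ vanish. So taking $p'=p$ fails.

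Your Step~2 claim, that a normalised nonzero block is a Nash equilibrium of $\tilde G_a$, is also unsupported. Remark~\ref{re:IncConsEq} gives equality in the full constraints $H^{X^{(i)}}_{k,\ell}(p_{-i,k})=0$, which sum over all of $\prod_{i'\neq i}[d_{i'}]$. It says nothing about the block-restricted constraints of $\tilde G_a$.

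\textbf{The fallback does not close the gap.} First, a totally mixed correlated equilibrium of a zero block need not exist, even after small perturbations. On the block $\{1,2\}\times\{1\}$ above, player~1 strictly prefers row~1 ($10>5$). So that subgame has a unique, pure correlated equilibrium, and this persists under perturbation. Second, applying Lemma~\ref{LemmaSubgameAdd} ``block by block'' ignores that the lemma requires the current point to be balanced. Adding mass typically destroys balancedness (the paper's final example shows exactly this), and the partitions $M^{(i)}_a$ change as well.

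\textbf{Missing idea.} You need an induction on the number of zero entries, with re-balancing at every step:
\begin{enumerate}
\item Balance $p$ by Proposition~\ref{PropositionBalancing}.
\item By the dichotomy, any zero entry now lies in an identically zero block.
\item Take \emph{any} correlated equilibrium of that block subgame (e.g.\ a Nash equilibrium), extend it by zero to $q$, and apply Lemma~\ref{LemmaSubgameAdd}.
\end{enumerate}
The new point $(1-\varepsilon)p+\varepsilon q$ keeps all positive entries of $p$ and gains at least one. So it still has non-zero slices and strictly fewer zeros, and $r$ does not drop by Remark~\ref{RemarkR}(3). Finitely many arbitrarily small perturbations then give the corollary. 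This is the paper's proof.
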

\begin{proof}
We use induction over the number of zero entries of $p$; let \[z(p):=\{(j_1, \dots, j_n) \mid j_i \in [d_i], p_{j_1, \dots, j_n}=0  \}.\]

Clearly, if $z(p)=0$, i.e., all entries of $p$ are positive, we can just take $G'=G$, $p'=p$ and we are done.

So assume now that $z(p)>0$. By Proposition \ref{PropositionBalancing}, there 
are $X'$ arbitrarily close to $X$ such that $p \in P_{G'}$ is balanced
for the game $G':=(n,S,X')$. By relabeling the pure strategies as well
as the parts in the partitions of $[d_i]$ for all $i$, we may assume
that $p_{1, \dots, 1}=0$ and that $1 \in M^{(i)}_1$ for all $i \in
[n]$. Then the $M^{(1)}_1 \times \cdots \times M^{(n)}_1$-block in $p$ is identically zero.  Let $\tilde{G}=(n,\tilde{S},\tilde{X})$ be the subgame
corresponding to the subsets $M^{(i)}_1 \subseteq S^{(i)}$ and let
$\tilde{q} \in P_{\tilde{G}}$ be a correlated equilibrium, which we
extend to a tensor $q \in \Delta_{D-1}$ by assigning $0$ to any 
tuple of pure strategies not in $\tilde{S}$. Then by Lemma
\ref{LemmaSubgameAdd} and Remark \ref{RemarkR}(3), there is an
$0<\varepsilon <1$ such that $p':=(1- \varepsilon)p + \varepsilon q
\in P_{G'}$ and $r(p') \geq r(p)$. Further, $p$ has zeros on the
positions where $q$ is positive, so we find that 
$z(p')<z(p)$. We are done by the induction hypothesis. 
\end{proof}

\begin{proposition} \label{prop:NotAllNash}
    Let $G=(n,S,X)$ be a generic game. If $P_G$ is a not a singleton, then it contains points $p \in \Delta_{D-1}$ that are {\em not} of the form $x_1 \otimes \cdots \otimes x_n$ for $(x_1,\ldots,x_n)$ a Nash equilibrium of $G$.
\end{proposition}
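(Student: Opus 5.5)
The plan is to argue by contradiction. Suppose $P_G$ is not a singleton and every point of $P_G$ has the form $x^{(1)}\otimes\cdots\otimes x^{(n)}$ for a Nash equilibrium. I will show this forces a non-identically-zero maximal minor of $A_G$ to vanish, contradicting genericity (Definition~\ref{de:Generic}).

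\emph{Step 1 (structure of a convex set of rank-one tensors).} Take two distinct points $p=x^{(1)}\otimes\cdots\otimes x^{(n)}$ and $q=y^{(1)}\otimes\cdots\otimes y^{(n)}$ of $P_G$. Their midpoint lies in $P_G$ and is therefore rank one. A sum of two distinct rank-one tensors with nonnegative entries is again rank one only if all but one factor agree up to scaling; since the factors are probability vectors, they agree exactly. Applying this to all pairs of points, and to triples via midpoints, gives the following. There is a single player $i$ and a fixed profile $z:=\bigotimes_{j\neq i}x^{(j)}$ such that
\[P_G=\{\,x^{(1)}\otimes\cdots\otimes x^{(i-1)}\otimes \xi\otimes x^{(i+1)}\otimes\cdots\otimes x^{(n)} \mid \xi\in Q\,\}\]
for a polytope $Q\subseteq\Delta_{d_i-1}$ of dimension $\geq 1$. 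Let $T\subseteq[d_i]$ be the union of the supports of points of $Q$, so $|T|\ge 2$, and let $\xi^0$ be a relative interior point of $Q$.

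\emph{Step 2 (equalities forced on the generic point).} Let $p^0$ be the product point with $\xi^0$ in the $i$-th factor. I read off which rows of $A_G$ are tight at $p^0$:
\begin{itemize}
\item the nonnegativity rows for entries outside $\operatorname{supp}(z)\times T$;
\item for player $i$ and $k,\ell\in T$, the rows $H^{X^{(i)}}_{k,\ell}(p^0_{-i,k})=0$, by Remark~\ref{re:IncConsEq};
\item for each $j\neq i$ and $k,\ell\in\operatorname{supp}(x^{(j)})$, the rows $H^{X^{(j)}}_{k,\ell}=0$, again by Remark~\ref{re:IncConsEq}.
\end{itemize}
Since every point of $Q$ gives an equilibrium, the constraints for $j\neq i$ must hold for all $\xi\in Q$. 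These constraints are linear in $\xi$. Hence each indifference of player $j$ splits into separate equations, one for each direction of $\operatorname{aff}Q$.

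\emph{Step 3 (the counting step, which I expect to be the main obstacle).} I want to show that the tight rows have rank exceeding $D-\dim P_G-1$, the rank that genericity allows. Genericity here means uniform position: any $D$ rows of $A_G$ whose minor is not identically zero are linearly independent.

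The equations from Step 2 are bilinear in $(x^{(j)})_{j\ne i}$ and $\xi$. Because $\xi$ varies in a $\ge 1$-dimensional family while the $x^{(j)}$ stay fixed, the incentive rows of players $j\neq i$ that are tight at $p^0$ are tight on all of $P_G$. Together with the rank-one constraint, this should produce at least $D$ tight rows at a vertex of the edge through $p^0$ that are dependent. Equivalently, some maximal minor that is not identically zero vanishes.

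To make this rigorous, my first attempt would be a perturbation argument rather than direct counting. Add $\varepsilon$ to all entries of $X^{(i)}$ where player $i$ plays some fixed $k\in T$, where $\varepsilon>0$ is small. Then $G_\varepsilon\sim G$. Vertices of $P_{G_\varepsilon}$ are solutions of linear systems with nonvanishing determinants, so they depend continuously on $\varepsilon$, and $P_{G_\varepsilon}\to P_G$ with the same face poset (Remark~\ref{RemarkGenericness}). If every $P_{G_\varepsilon}$ were again of the Step 1 form, then along a subsequence the player and support would stabilise to $i$ and $T$. The perturbed profile $z_\varepsilon$ would then have to satisfy player $i$'s $|T|-1$ indifference conditions and also all the split indifference conditions of the other players. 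This is one more equation than unknowns, and I would argue it has no solution for small $\varepsilon\neq 0$. That yields a point of $P_{G_\varepsilon}$, and by transporting along the same face (Remark~\ref{RemarkGenericness}), a point of $P_G$, which is not a Nash product, contradicting the assumption.
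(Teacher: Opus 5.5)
\textbf{There is a genuine gap.} Your Step 1 is the paper's first step. However, the two ingredients that actually produce the contradiction are missing.

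The first is the final ``transport''. Remark~\ref{RemarkGenericness} gives you, for $G_\varepsilon\sim G$, the same face poset and the same non-vanishing slices. Nothing in it turns a non-rank-one point of $P_{G_\varepsilon}$ into one of $P_G$: being a Nash product is not a priori oriented-matroid data. What you need is that the property `$P_{G'}$ consists of Nash products only' persists for all $G'$ near $G$. This is where the paper uses Proposition~\ref{PropositionBalancing}:
\begin{itemize}
\item The set $Z_G$ of rows of $A_G$ tight on $P_G$ contains all incentive rows between strategies in the common support, and $Z_{G'}=Z_G$ by genericity.
\item So a non-product point $p'$ in the relative interior of $P_{G'}$ has nonzero, non-proportional slices $p'_{-i,k},p'_{-i,\ell}$ with $H^{Y^{(i)}}_{k,\ell}(p'_{-i,k})=0$. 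Hence $p'$ is not balanced.
\item Balancing $p'$ by an arbitrarily small change of payoffs makes that row strict. This changes the tight set inside the same stratum, a contradiction.
\end{itemize}

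The second gap is the claim that the perturbed system has no solution for $\varepsilon\neq 0$. The ``one more equation'' count is not justified. More importantly, no count of equations against unknowns can prove non-existence here. The unperturbed system is just as overdetermined and is solvable by hypothesis. A single one-parameter perturbation need not destroy a positive-dimensional solution set unless you prove a transversality statement. The paper avoids this by invoking Harsanyi's theorem that games with finitely many Nash equilibria are dense. So some $G'$ in the neighbourhood has finitely many equilibria, yet $P_{G'}$ is a non-singleton (same face poset) made of Nash products.

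Your direct counting in Step 3 fails for a related reason. The rows tight on $P_G$ always have rank exactly $D-\dim P_G-1$. Exhibiting $D$ dependent tight rows contradicts Definition~\ref{de:Generic} only if their minor is not identically zero, and the structural zeros of $A_G$ often rule that out. For $n=2$ a minor argument can in fact be completed. Padding incentive rows with nonnegativity rows, player $i$'s rows (which kill $x^{(j)}$) force $|T|\le|\operatorname{supp}x^{(j)}|$. Player $j$'s rows (which kill the at least two-dimensional span of $Q$) force $|\operatorname{supp}x^{(j)}|<|T|$. For $n\ge 3$, however, the indifference conditions are multilinear in the other players' strategies, and it is not clear that individual maximal minors detect them. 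This is why the paper goes through openness of the stratum plus Harsanyi's theorem instead.
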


Loosely speaking, if $G$ is generic and $P_G$ is not a singleton, then it does not consist of Nash equilibria alone. This is certainly true for almost all games, since these have finitely many Nash equilibria \cite{Harsanyi73, Wilson71}. But this needs a rigorous proof since the notions of genericity are defined differently. On the other hand, the Nash equilibria are on the topological boundary of $P_G$ by \cite{NGH03}, hence if $P_G$ is full-dimensional, then it does not only consist of Nash equilibria either. The point of the proposition above is that it applies to generic games in our sense, and even if $P_G$ is lower-dimensional. 

\begin{proof}
    Assume that $P_G$ is not a singleton and does consist of Nash equilibria only. For distinct $p=x^{(1)} \otimes \cdots \otimes x^{(n)}, q=y^{(1)} \otimes \cdots \otimes y^{(n)} \in P_G$, where $x^{(i)},y^{(i)} \in \Delta_{d_i-1}$, the convex combination $(p+q)/2$ is not a rank-one tensor unless $x^{(i)}=y^{(i)}$ for all but one $i \in [n]$, say for all $i \in [n-1]$. Let $r=z^{(1)}\otimes \cdots \otimes z^{(n)} \in P_G$, and assume that $z^{(i)} \neq x^{(i)} (=y^{(i)})$ for some $i \in [n-1]$. Since also $(r+p)/2$ and $(r+q)/2$ are in $P_G$, $z^{(n)}$ must equal both $x^{(n)}$ and $y^{(n)}$, a contradiction. Hence we find that 
    \[ P_G=\{x^{(1)} \otimes \cdots \otimes x^{(n-1)} \otimes u \mid u \in Q\} \]
    where $Q \subseteq \Delta_{d_n-1}$ is a polytope.

    Let $Z_G$ be the set of rows of the matrix $A_G$ from Definition~\ref{de:Generic} corresponding to inequalities that hold with equality on $P_G$. Let $\tilde{S}^{(i)} \subseteq [d_i]$ be the set of slices that are not identically zero on $P$; for $i \leq n-1$ this is the support of $x^{(i)}$ and for $i=n$ it is the set of coordinates that are not identically zero on $Q$. Now $Z_G$ contains the rows corresponding to the inequalities $p_{j_1,\ldots,j_n} \geq 0$ for $(j_1,\ldots,j_n) \not \in \tilde{S}^{(1)} \times \cdots \times \tilde{S}^{(n)}$, and---because $P_G$ consists of Nash equilibria only---also the rows corresponding to the incentive constraints $H_\ell^{X^{(i)}}(p_{-i,k}) \geq 0$ for all $i$ and all $k,\ell \in \tilde{S}^{(i)}$. (This last observation is also used in the proof of \cite[Proposition 2]{NGH03}.)

    Now let $G'=(S,n,Y)$ be a any game with $Y$ sufficiently close to $X$. We claim that $P_{G'}$ also consists of Nash equilibria only. Suppose, for a contradiction, that $p' \in P_{G'}$ does not correspond to a Nash equilibrium; we may assume that $p'$ is in the relative interior of $P_{G'}$. Since $G$ is generic, $A_{G'}$ defines the same oriented matroid as $A_G$, and hence $Z_{G'}=Z_G$. In particular, $p'$ has support $\tilde{S}^{(1)} \times \cdots \times \tilde{S}^{(n)}$ and also satisfies the incentive constraints $H^{Y^{(i)}}_\ell(p'_{-i,k}) \geq 0$ for all $i$ and $k,\ell \in \tilde{S}^{(i)}$. Since $p'$ is not a Nash equilibrium, however, there exist $i \in [n]$ and $k,\ell \in \tilde{S}^{(i)}$ such that the $k$-th and $\ell$-th slice of $p'$ are not scalar multiples of each other. Since we do have $p'_{-i,k} \in \mathcal{C}^{Y^{(i)}}_\ell$ (and the same statement with $k,\ell$ reversed), $p'$ is not balanced for $G'$. By Proposition~\ref{PropositionBalancing}, there exists $X''$ aribitrarily near $Y$ so that $p'$ {\em is} balanced for $G'':=(S,n,X'')$. However, this contradicts the fact that $X''$ is still arbitrarily near $X$ and hence $A_{G''}$ still defines the same matroid. This proves the claim.

    So we have found that for $Y$ in a neighbourhood of $X$, $P_{G'}$ consists of (infinitely many) Nash equilibria only. But this contradicts the result from \cite{Harsanyi73} that games with finitely many Nash equilibria are dense.
\end{proof}

\begin{theorem}
\label{TheoremFullOrSingleton}
Let $G=(n,S,X)$ be a generic game. Assume there exists a $p \in P_G$ such that $p_{-i,k} \neq 0$ for any $i \in [n]$, $k \in [d_i]$. Then the correlated equilibrium polytope $P_G$ is either full-dimensional or a singleton.     
\end{theorem}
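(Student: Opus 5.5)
The plan is to combine the density argument from Corollary~\ref{CorollaryNonZero} with Proposition~\ref{prop:NotAllNash} and genericity, in the form of the oriented matroid being locally constant.

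\emph{Step 1: reduce to a totally positive point.} Assume $P_G$ is not a singleton. By Corollary~\ref{CorollaryNonZero} there are games $G'=(n,S,X')$ with $X'$ arbitrarily close to $X$, and hence $G'\sim G$, that have a point $p'\in P_{G'}$ with all entries positive. Remark~\ref{RemarkGenericness}(1) says $P_G$ and $P_{G'}$ have the same face poset. So it suffices to prove the statement for $G'$, and we may assume from the start that $p$ lies in the interior of $\Delta_{D-1}$. Then no nonnegativity constraint is tight on all of $P_G$. The affine hull of $P_G$ is therefore cut out by the incentive constraints that are tight on all of $P_G$ (the implicit equalities).

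\emph{Step 2: show no incentive constraint is an implicit equality.} Suppose $H^{X^{(i)}}_{k,\ell}(q_{-i,k})=0$ for all $q\in P_G$. I would use balancing as in the proof of Proposition~\ref{prop:NotAllNash}. Pick $q$ in the relative interior of $P_G$; after replacing it by a convex combination with $p$, it is totally positive. By Proposition~\ref{PropositionBalancing}, perturb $X$ to $X''$ with $G''\sim G$ so that $q$ is balanced for $G''$. Genericity makes the set $Z$ of implicitly tight rows the same for $G''$ and for $G$, so $q$ still lies on this constraint. Balancedness then forces $\pi_{-i,k}(q)=\pi_{-i,\ell}(q)$. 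This holds for every $q$ in the relative interior of $P_{G''}$, and hence on all of $P_{G''}$. Both slice directions contribute, since the constraint $(i,\ell,k)$ is then tight as well by Remark~\ref{re:IncConsEq}. Now iterate: the collection of equalities $\pi_{-i,k}=\pi_{-i,\ell}$ forced on $P_{G''}$ defines partitions of each $[d_i]$.

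\emph{Step 3: derive the dichotomy.} The goal is to show that either the partitions are all trivial, so no implicit equalities exist and $P_G$ is full-dimensional (the positive point shows the simplex constraint is the only equation), or every incentive constraint is tight. If every constraint is tight, then all slices in each direction are proportional, every point is rank one, and $P_{G''}$ consists of Nash equilibria. Proposition~\ref{prop:NotAllNash} then forces $P_{G''}$, and hence $P_G$, to be a singleton. The main obstacle is the intermediate case, where some but not all pairs $k,\ell$ are identified. For this I would use Lemma~\ref{LemmaSubgameAdd} applied to a block subgame $M^{(1)}_{a_1}\times\cdots\times M^{(n)}_{a_n}$ with at least two parts in some direction. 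Mixing a correlated equilibrium of a suitable subgame (for instance one supported on a single part, or on a non-rank-one point of that subgame supplied by Proposition~\ref{prop:NotAllNash}) into a balanced totally positive $q$ should produce a point of $P_{G''}$ whose slices $k,\ell$ in one part are no longer proportional. That would contradict the forced equality. The care needed is in choosing the subgame so that its own $P$ is not a singleton, and in handling the case where all blocks have singleton correlated equilibrium polytopes. I expect this last case to be resolved by showing directly that the blocks are then rank one and globally consistent, which would collapse everything to a single Nash equilibrium.
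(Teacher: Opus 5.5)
Your Steps 1 and 2 are essentially sound, and the two endpoints of the dichotomy in Step 3 are handled correctly. In Step 2 the balancing game $G''$ depends on $q$. However, the conclusion $\pi_{-i,k}(q)=\pi_{-i,\ell}(q)$ concerns $q$ alone, so applying it to each $q$ in the relative interior of $P_G$ and taking closures gives the forced proportionality on all of $P_G$. The same argument gives it on $P_{G'}$ for every $G'\sim G$.

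The gap is the intermediate case. That case is the entire content of the theorem, and you offer only a hope there. Two ingredients are missing.

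\textbf{First missing ingredient: a mechanism by which mixing breaks a forced proportionality.} Take a balanced, totally positive $q$ and relabel so that $M^{(1)}_1$ has at least two elements. For every $i$ and $k\in M^{(i)}_1$ write $q_{-i,k}=x^{(i)}_k s^{(i)}$, and set $\tilde{t}:=x^{(1)}\otimes\cdots\otimes x^{(n)}$ on the block $\prod_i M^{(i)}_1$. Let $y^{(1)}\otimes\cdots\otimes y^{(n)}$ be a Nash equilibrium of the block subgame with $y^{(i)}\neq x^{(i)}$. Consider the matrix whose rows are the part-$M^{(i)}_1$ slices of the mixture; the mixture lies in $P_{G''}$ by Lemma~\ref{LemmaSubgameAdd}. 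This matrix has rank two. Its two rank-one summands have different column spaces. They also have different row spaces, because there is a second direction $i_2\neq i$ with $m_{i_2}\ge 2$ (this uses that $q$ is not rank one). Hence $s^{(i)}$ has positive entries off the block, where the mixed-in tensor vanishes. With this argument, two of your subcases go through: ``block polytope not a singleton'' (via Proposition~\ref{prop:NotAllNash}) and ``unique block equilibrium different from $\tilde{t}$''.

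\textbf{Second, more serious, missing ingredient: the case where the block subgame's Nash equilibrium is exactly $\tilde{t}$.} Your split into singleton versus non-singleton block polytopes does not isolate this case. Your proposed resolution, that the blocks are rank one and ``globally consistent'' so that everything collapses to one Nash equilibrium, has no basis:
\begin{itemize}
\item every block of a balanced point is rank one anyway;
\item the block subgame carries no information about $q$ off the block;
\item $q$ is not rank one in the intermediate case.
\end{itemize}

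The paper instead perturbs the payoffs within the stratum to destroy the coincidence. Let $t$ be $\tilde{t}$ extended by zero. Then $q_{-1,1}$ and $t_{-1,1}$ are linearly independent, again by the $i_2$ argument, so there is a linear form $h$ with $h(q_{-1,1})=0>h(t_{-1,1})$. Replace $X^{(1)}_1$ by $X^{(1)}_1+\varepsilon h$. At $q$, every incentive constraint keeps its value except $H_{k,1}(q_{-1,k})$ for $k\notin M^{(1)}_1$. Those are strictly positive by balancedness and stay so for small $\varepsilon$, so $q$ remains a balanced correlated equilibrium. Meanwhile $H^{X^{(1)}}_{1,2}(t_{-1,1})=0$ by Remark~\ref{re:IncConsEq}, so now $H^{Y^{(1)}}_{1,2}(t_{-1,1})=\varepsilon h(t_{-1,1})<0$. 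Thus $\tilde{t}$ is no longer an equilibrium of the perturbed block subgame, and the first mechanism applies in a game $G'\sim G$.

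Once both pieces are in place, your contradiction framing is slightly more economical than the paper's argument. A single point of some $P_{G'}$ with $G'\sim G$ whose slices are not proportional already contradicts the stratum-invariant implicit equality. So you would not need the paper's induction on $\sum_i d_i-r(p)$.
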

\begin{proof}
Assume $P_G$ is not a singleton. By Proposition~\ref{prop:NotAllNash}, we may choose $p \in P_G$ with $p_{-i,k} \neq 0$ for all $i \in [n]$, $k \in [d_i]$ that moreover is not a Nash equilibrium, so that $n<r(p)$. 
We argue by induction on the number \[
\left(\sum_{i=1}^n d_i\right)-r(p).\] 
If $r(p)=\sum_{i=1}^n d_i$, then all
incentive constraints for $p$ are satisfied with a strict inequality
and thus $P_G$ is full-dimensional. So assume from now on that that $r(p)<\sum_{i=1}^n d_i$. 

By Proposition \ref{PropositionBalancing} and Corollary
\ref{CorollaryNonZero}, the point $p$ can be assumed balanced and to
have only positive entries. 

We will now construct
a rank-1-tensor $q \in \Delta_{D-1}$, together with an $0<\varepsilon<1$
and an $X'$ arbitrarily close to $X$ such that for the game
$G':=(n,S,X')$ we have $p':=(1-\varepsilon)p
+ \varepsilon q \in P_{G'}$ and $r(p')>r(p).$ By
the induction hypothesis, the theorem then follows.

To find $q$, we begin by relabeling the players, their pure strategies, and the
parts in the partitions of the $[d_i]$ conveniently, as follows:
\begin{enumerate}
    \item Assume that $M^{(i)}_1=[n_1^{(i)}]$ for all $i \in [n]$. 
    \item Since $r(p)< \sum_{i=1}^n d_i$, there is at least one $i$
    for which the partition of $[d_i]$ from
    Definition~\ref{de:Partition} has a part that is not a
    singleton. So we may assume that $n_1^{(1)} \geq 2$.
    \item The condition that $r(p)>n$ implies that, as a tensor, $p$
    has rank $>1$. Then it has some flattening to a matrix that also
    has rank $>1$. This implies that there are at least two indices
    $i$ with $m_{i}>1$, say $i_1$ and $i_2$. 
\end{enumerate}

Let $\tilde{G}=(n,\tilde{S},\tilde{X})$ be the subgame with
$\tilde{S}:=\prod_{i=1}^n M^{(i)}_1$. Let $(y^{(1)},\dots, y^{(n)})$
be a Nash equilibrium of $\tilde{G}$ and let $\tilde{q}:=y^{(1)}
\otimes \dots \otimes y^{(n)} \in P_{\tilde{G}}$, which we extend to a
tensor $q \in \Delta_{D-1}$ by assigning $0$ to any tuple of pure
strategies not in $\tilde{S}$. 

For each $i \in [n]$, the slices $p_{-i,k}$ with $k \in [d^{(i)}_1]$
are all non-zero scalar multiples of each other. Hence 
there is a unique $x^{(i)} \in \Delta_{d^{(i)}_1 -1}$ such that
\[p_{-i,k}= x^{(i)}_k  \sum_{\ell =1}^{d^{(i)}_1} p_{-i,\ell}\]
for those $k$. We define the tensor $\tilde{t} := x^{(1)} \otimes \dots \otimes x^{(n)}$.

First assume that $\tilde{q} \neq \tilde{t}$. Then there is an index $i$
such that $x^{(i)} \neq y^{(i)}$. Since $x^{(i)},y^{(i)}$ are both in the
probability simplex $\Delta_{d^{(i)}-1}$, this implies that
$x^{(i)},y^{(i)}$ are linearly independent. Let
$A$ be the $d^{(i)}_1 \times \left(\prod_{i' \neq i} d_{i'}\right)$-matrix whose
rows are the slices of $p$ labeled by $k \in [d^{(i)}_1]$ (vectorized in
some fixed but arbitrary manner); so $A$ is part of a
flattening of $p$. Let $B$ be the corresponding part of a flattening
of $q$. Then $A$ and $B$ are rank-one matrices with column spaces spanned
by $x^{(i)}$ and $y^{(i)}$, respectively, hence different column
spaces since $x^{(i)},y^{(i)}$ are linearly independent.

Now at least one of $i_1,i_2$ from item (3) above is not equal to $i$;
say that $i_2 \neq i$. This implies that the rows of $A$ have positive
entries in positions whose $i_2$-th coordinate is $>d^{(i_2)}_1$,
whereas $B$ only has zero entries there. This means that also the column
spaces of $A$ and $B$ are distinct.  Then for any $0 < \varepsilon <1$
the combination $C:=(1-\varepsilon)A + \varepsilon B$ is a rank-two matrix. 
Moreover, by Lemma \ref{LemmaSubgameAdd}, we may choose this
$\varepsilon$ such that $p':=(1-\varepsilon)p+\varepsilon q$ lies in $P_G$.
We conclude that the slices of $p'$ in the $i$-th direction labeled by
$k \in [d^{(i)}_1]$, which correspond to the rows of $C$, are not all
scalar multiples of each other. Now, Remark \ref{RemarkR}(3) implies that (by potentially decreasing $\varepsilon$ further) $r(p')>r(p)$. 

We are left with the case that $\tilde{q} = \tilde{t}$. Recall that $d^{(1)}_1 \geq 2$; we will adjust the payoff tensor $X^{(1)}$. 
At least one of $i_1$ and $i_2$ is not equal to $1$, say $i_2 \neq 1$. Now 
$p_{-1,1}$ has positive entries on positions whose $i_2$-coordinate
is $>d^{(i_2)}_1$, while $q_{-1,1}$ has zeros there. Hence the 
slices $p_{-1,1}$ and $q_{-1,1}$ are linearly independent, and there
exists a linear function $h \in (\RR^{D/d_i})^*$ with 
$h(p_{-1,1})=0>h(q_{-1,1})$. Define the
$(d_1 \times \dots \times d_n)$-tensor $Y^{(1)}$ by setting its $k$-th slice
in the first direction to be 
\[Y^{(1)}_k= \begin{cases}
    X^{(1)}_1 + \varepsilon h & \text{if } k=1, \text{ and}\\
    X^{(1)}_k & \text{otherwise.}
\end{cases} 
\] for some small $\varepsilon>0$. Now set $X':=(Y^{(1)},X^{(2)}, \dots,
X^{(n)})$ and $G':=(n,S,X')$ with the subgame $\tilde{G'}$ corresponding
to the subsets strategies $M^{(1)}_1, \dots, M^{(n)}_1$. We claim that, if
$\varepsilon$ is sufficiently small, then $p \in P_{G'}$ and $p$ is balanced
for $G'$. Indeed, the only incentive constraints that may have changed
compared to those for $G$ are the inequalities
$H^{Y^{(1)}}_{k,\ell}(p_{-1,k})
\geq 0$ when at least one of $k,\ell$ is equal to $1$. However:
\begin{itemize} 
\item if $k=1$, then since $h(p_{-1,1})=0$,
the constraint in fact does not change; 
\item if $\ell=1$ and $k \in [d^{(1)}_1] \setminus \{1\}$, then $p_{-1,k}$ is a scalar multiple of
$p_{-1,1}$ and hence $h(p_{-1,k})=0$, and again the constraint does
not change; and 
\item if $\ell=1$ and $k>d^{(1)}_k$, then since $p$ is balanced we have
a strict inequality $H^{X^{(1)}}_{k,\ell}(p_{-1,k})>0$, hence
choosing $\varepsilon$
sufficiently small we still have $H^{Y^{(1)}}_{k,\ell}(p_{-1,k})>0$. 
\end{itemize}
The balancedness of $p$ for $G'$ follows because we keep all the
strict inequalities among the incentive constraints. 

Now the first two slices of $q$ in the first direction are positive
scalar multiples of each other; this follows from 
$d^{(1)}_1>1$, the fact that $q$
has rank $1$, and the fact that $y^{(1)}=x^{(1)}$, where the latter vector
only has positive entries. We therefore have
$H^{X^{(1)}}_{1,2}(q_{-1,1})=0$ by Remark~\ref{re:IncConsEq}. Hence
\[ H^{Y^{(1)}}_{1,2}(q_{-1,1})=H^{X^{(1)}}_{1,2}(q_{-1,1})+\varepsilon h(q_{-1,1})= \varepsilon h(q_{-1,1})<0.\] Thus $\tilde{q} \not\in P_{\tilde{G'}}$, i.e., $(x^{(1)},\dots, x^{(n)})$ is not a Nash equilibrium of $\tilde{G'}$. Finally, let $(z^{(1)}, \dots, z^{(n)})$ be a Nash equilibrium of $\tilde{G'}$ and let $\tilde{u}:=z^{(1)} \otimes \dots \otimes z^{(n)}$. Then $\tilde{u} \neq \tilde{t}$, so we have reduced to the former case and we are done.
\end{proof}

The following theorem now gives a precise version of Theorem \ref{MainTheorem}.
\begin{theorem}
\label{thm:MainTheorem2}
Let $G$ be a generic game.  For each $i \in [n]$, let \[ S_c^{(i)}:=
\{k \mid k \in [d_i], \ p_{-i,k} \neq 0 \ \text{for some} \ p \in P_G \}
\] be the set of pure strategies that have a positive marginal probability
for some correlated equilibrium. Let $\tilde{G}$ be the subgame of $G$
with pure strategies $\tilde{S}=\prod_{i=1}^n S_c^{(i)}$. Then $P_G$
is affinely isomorphic to $P_{\tilde{G}}$ and $P_{\tilde{G}}$ is either
full-dimensional or a singleton.
\end{theorem}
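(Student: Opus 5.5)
The plan is to split the statement into two parts: first identify $P_G$ with $P_{\tilde G}$ via the zero-extension map, and then apply Theorem~\ref{TheoremFullOrSingleton} to $\tilde G$.

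\emph{Affine isomorphism.} Every $p \in P_G$ has $p_{-i,k}=0$ for all $k \notin S_c^{(i)}$. So $p$ is supported on $\tilde S=\prod_i S_c^{(i)}$ and is the zero-extension of its restriction $\tilde p$. Let $\iota$ be the zero-extension map $\RR^{\tilde D}\to\RR^D$, which is linear and injective. I will show that $\iota$ restricts to a bijection $P_{\tilde G}\to P_G$. For $p\in P_G$, the incentive constraints of $\tilde G$ for $\tilde p$ are exactly those of $G$ with $k,\ell\in S_c^{(i)}$, evaluated on slices whose extra entries vanish. Hence $\tilde p\in P_{\tilde G}$, and $P_G\subseteq \iota(P_{\tilde G})$.

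The converse inclusion is the real content. We need every $\tilde q\in P_{\tilde G}$, once extended, to satisfy the constraints $H^{X^{(i)}}_{k,\ell}(q_{-i,k})\ge 0$ with $k\in S_c^{(i)}$ and $\ell\notin S_c^{(i)}$. Constraints with $k\notin S_c^{(i)}$ are trivial because $q_{-i,k}=0$. To handle the remaining ones, I would take $p$ in the relative interior of $P_G$; by convexity, $p_{-i,k}\neq 0$ for every $k\in S_c^{(i)}$. Then I would use genericity and Proposition~\ref{PropositionBalancing} to show that the constraints $H_{k,\ell}(p_{-i,k})\ge0$ with $\ell\notin S_c^{(i)}$ are strict. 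Suppose one held with equality. Perturbing $X$ to make $p$ balanced keeps the oriented matroid and hence the face lattice, by Remark~\ref{RemarkGenericness}. After the perturbation, $\pi_{-i,k}(p)\in\mathcal C_\ell$ forces $\pi_{-i,\ell}(p)=\pi_{-i,k}(p)\neq 0$, contradicting $\ell\notin S_c^{(i)}$. More precisely, the set of tight rows at a relative-interior point is determined by the oriented matroid, so it coincides for $G$ and $G'$. A tight constraint $(i,k,\ell)$ for $G$ would then stay tight for $G'$, and since $p\in P_{G'}$ is balanced with $p_{-i,\ell}=0$ this is impossible.

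With strictness in hand, Lemma~\ref{LemmaSubgameAdd} gives $(1-\varepsilon)p+\varepsilon q\in P_G$ for small $\varepsilon>0$. Its argument only uses strict inequalities for $\ell$ outside the block, so it applies even though $\tilde S$ need not be a single partition block. Because $P_G\subseteq\iota(P_{\tilde G})$ and $\iota$ is linear, $\iota^{-1}$ of this point, namely $(1-\varepsilon)\tilde p+\varepsilon\tilde q$, lies in $P_{\tilde G}$. To upgrade from these perturbed points to $q$ itself, I would use a face-dimension comparison: $\iota(P_{\tilde G})\cap P_G$ contains a neighbourhood of $p$ in the affine span of $\iota(P_{\tilde G})$. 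Hence $P_G$, which is a subset of $\iota(P_{\tilde G})$, has the same affine span near $p$. The constraints cutting $P_G$ out of $\iota(P_{\tilde G})$ are then exactly those with $\ell\notin S_c^{(i)}$, and these are strict at $p$. Alternatively I would argue directly: if some $q\in\iota(P_{\tilde G})\setminus P_G$ existed, the segment from $p$ to $q$ would leave $P_G$ at a point where a constraint with $\ell\notin S_c^{(i)}$ becomes tight. That boundary point still has slices $k\in S_c^{(i)}$ nonzero, and at the boundary point, after balancing via Proposition~\ref{PropositionBalancing}, a tight constraint with $p_{-i,\ell}=0$ and $p_{-i,k}\neq0$ contradicts balancedness. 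This yields $P_G=\iota(P_{\tilde G})$, so $\iota$ is the required affine isomorphism.

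\emph{Full-dimensional or singleton.} By Remark~\ref{RemarkGenericness}(3), $\tilde G$ is generic. The relative-interior point $\tilde p$ has all slices $\tilde p_{-i,k}\neq0$ for $k\in S_c^{(i)}$, so Theorem~\ref{TheoremFullOrSingleton} applies and $P_{\tilde G}$ is full-dimensional or a singleton. The step I expect to be the main obstacle is the strictness and extension argument. Its delicate point is justifying that balancing by a small perturbation preserves exactly which incentive constraints are tight, which relies on the oriented-matroid genericity of Definition~\ref{de:Generic}.
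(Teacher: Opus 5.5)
Several parts of your plan are correct:
\begin{itemize}
\item the inclusion $P_G\subseteq\iota(P_{\tilde G})$;
\item the final application of Theorem~\ref{TheoremFullOrSingleton} to $\tilde G$;
\item the strictness, at a relative-interior point $p$ of $P_G$, of all constraints with $\ell\notin S_c^{(i)}$. Your argument works because constraints tight at a relative-interior point are implicit equalities, which are an oriented-matroid invariant and so survive the balancing perturbation.
\end{itemize}

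\textbf{The gap.} These facts only give a \emph{local} conclusion: $P_G\supseteq(1-\varepsilon)p+\varepsilon\,\iota(P_{\tilde G})$, so the two polytopes have equal dimension and agree near $p$. The reverse inclusion $\iota(P_{\tilde G})\subseteq P_G$, which is the real content of the theorem, is not proved.

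Your face-dimension comparison is a non sequitur. A constraint with $\ell\notin S_c^{(i)}$ that is strict at $p$ can still define a facet of $P_G$ that cuts off part of $\iota(P_{\tilde G})$ away from $p$.

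Your direct argument fails exactly where the contradiction is needed. At the exit point $b$, the tight constraint $H_{k,\ell}(b_{-i,k})=0$ has $b_{-i,\ell}=0\neq b_{-i,k}$. That is precisely the kind of constraint the proof of Proposition~\ref{PropositionBalancing} makes \emph{strict}, so after balancing there is nothing left to contradict. Under $G\sim G'$, tightness is preserved only face by face; a given point may move to a different face. It is an invariant at a specific point only for relative-interior points, and $b$ is a boundary point by construction.

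\textbf{The missing idea.} You need to turn the violated constraint into a correlated equilibrium of $G$, or of a nearby $G'\sim G$, that puts positive weight on a strategy outside $S_c^{(i)}$. Remark~\ref{RemarkGenericness}(2) then gives the contradiction. The paper does this in four steps.
\begin{enumerate}
\item If $p\in P_{\tilde G}\setminus P_G$ violates $H_{m,\ell}(p_{-i,m})\ge0$, then some $j\notin S_c^{(i)}$ is a best response to $\pi_{-i,m}(p)$.
\item Raising $X^{(i)}_j$ slightly makes $j$ a strict best response. This changes neither $\tilde G$ nor, by genericity, $S_c^{(i)}$.
\item Balance a point $q\in P_G$ whose slices indexed by $S_c^{(i)}$ are nonzero, and mix it with a relative-interior point of $P_{\tilde G'}$. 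The result $q'$ has all nontrivial incentive constraints strict.
\item Add a small multiple of the tensor $u$ with $u_{-i,j}=\pi_{-i,m}(p)$ and all other $i$-slices zero. This gives $q''\in P_{G'}$ with $q''_{-i,j}\neq0$.
\end{enumerate}
Step 3 uses that $P_{\tilde G'}$ is full-dimensional. That is why the dichotomy for $\tilde G$ must be established \emph{before} the equality, with the singleton case handled separately. Your plan only uses the dichotomy afterwards.

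Your exit-point idea might be rescued in a similar spirit. At $b$, $\ell$ is a best response to $\pi_{-i,k}(b)$, so moving a small fraction of slice $k$ to slice $\ell$ keeps player $i$'s constraints valid. For the other players, however, you would need their constraints to be strict at $b$. That requires choosing $b$ in the relative interior of a facet and a genericity argument that no other player's constraint is tight there. Balancing does not provide this.
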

\begin{proof}
By assigning $0$ to the pure strategies in $S \backslash \tilde{S}$,
we can think of $P_{\tilde{G}}$ as a polytope in $\Delta_{D-1}$. The
choice of $\tilde{S}$ implies that $P_G \subseteq P_{\tilde{G}}$: the
incentive constraints for $G$ and the zeros on positions in $S \setminus
\tilde{S}$ imply the (fewer) incentive constraints for $\tilde{G}$. By
Theorem \ref{TheoremFullOrSingleton} applied to $\tilde{G}$ (which is
generic by Remark~\ref{RemarkGenericness}(3)), the latter polytope is
either full-dimensional or a singleton. So the theorem follows if we can
show that the the inclusion $P_G \subseteq P_{\tilde{G}}$ is an equality.

If $P_{\tilde{G}}$ is a singleton, then since $P_G$ is non-emtpy, we are done. So assume now that $P_{\tilde{G}}$ is not a singleton and that there is a $p \in P_{\tilde{G}}$ such that $p \not\in P_G$; we wish to derive a contradiction. 

Since $p \not\in P_G$, there are $i \in [n]$ and indices $m, \ell \in
[d_i]$ with $m \in S^{(i)}_c$, $\ell \not\in S^{(i)}_c$, such that
$H^{X^{(i)}}_{m,\ell}(p_{-i,m})<0.$ Since $p \in P_{\tilde{G}}$, in
this latter inequality we have $\geq 0$ whenever $\ell \in S^{(i)}_c$. These two
observations imply that the maximal inner product of a slice of $X^{(i)}$
with $p_{-i,m}$ is achieved at some slice whose label $j$ is not in
$S^{(i)}_c$. For this $j$, we have $H^{X^{(i)}}_{j,k}(p_{-i,m})
\geq 0$ for all $k \in [d_i]$. Note that slightly increasing all
entries of $X^{(i)}_j$ does not influence the subgame $\tilde{G}$ at
all. Furthermore, since $G$ is generic, this also does not affect the
support sets $S^{(i)}$ and the fact that $p \not \in P_{G}$. So we can
assume without loss of generality that $H^{X^{(i)}}_{j,k}(p_{-i,m})
> 0$ for any $k \in [d_i]$.

Let $q \in P_G$ be a correlated equilibrium such that $q_{-i,k} \neq 0$
for any $k \in S^{(i)}_c$. By Proposition \ref{PropositionBalancing},
there are $Y$ arbitrarily close to $X$ such that $q \in P_{G'}$ is
balanced for the game $G':=(n,S,Y)$. Clearly $Y$ can be chosen
such that $H^{Y^{(i)}}_{j,k}(p_{-i,m}) > 0$ for all $k \in [d_i]$.

Let $\tilde{G'}$ be the subgame of $G'$ corresponding to $\tilde{S}$.
By Theorem \ref{TheoremFullOrSingleton}, the correlated equilibrium
polytope $P_{\tilde{G'}}$ is full-dimensional.  We may choose a $t' \in
P_{\tilde{G'}}$ in the relative interior, i.e., all incentive constraints
coming from $\tilde{S}$ are positive. Since $q$ is balanced, there now is
an $0<\varepsilon <1$ such that $q':=(1-\varepsilon)q+ \varepsilon t'$
lies in $P_{G'}$,  is balanced, and has 
all incentive constraints coming from $\tilde{S}$ positive.

We now define the tensor $u \in \Delta_{D-1}$ by its slices in the
$i$-th direction: 
\[u_{-i,k} =  \begin{cases}
   \pi_{-i,m} (p) & \ \text{if} \ k = j, \\
   0 & \ \text{if} \ k \neq j.
\end{cases}   \] 

Choose an $0<\varepsilon' <1$ and set $q'':=(1-\varepsilon')q'+
\varepsilon' u$. We now show that for $\varepsilon'$ small enough, we
have $q'' \in P_{G'}$. By construction, the incentive
constraints for player $i$ are non-negative. So consider $i' \in [n]
\backslash \{i \}$. We have $q''_{-i',k} =0$ for any pure strategy $k \not\in S^{(i')}_c$, so assume now that $k \in S^{(i')}_c$ and let $\ell \in [d_{i'}] \backslash \{k\}$. But since $q'$ is balanced and all incentive constraints coming from $\tilde{S}$ are positive, we have that $H^{X^{(i')}}_{k,\ell}(q'_{-i',k})>0$. So indeed, by choosing $\varepsilon'$ small enough, it follows that  $H^{X^{(i')}}_{k,\ell}(q''_{-i',k})>0$. In conclusion, $q'' \in P_{G'}$ with $q''_{-i,j} \neq 0$. By Remark \ref{RemarkGenericness}, there now also exists a $v \in P_G$ with $v_{-i,j} \neq 0$, but this contradicts $j \not\in S^{(i)}_c$ and we are done.
\end{proof}

\begin{example}
We consider the generic $(3 \times 3)$-game $G$ from Example~\ref{ex: 3x3 game balanced point}:
 \[ (X^{(1)},X^{(2)})= \begin{pmatrix}
    (10,12) & (15,10) & (11,13) \\
    (5,9), & (4,15) & (19,8) \\
    (18,18) & (9,17) & (13,10)
\end{pmatrix}.  \]
Consider the balanced correlated equilibrium point \[ p:= \frac{1}{200} \begin{pmatrix}
   0 & 56 & 77 \\
   0 & 24 & 33 \\
   10 & 0 & 0 
\end{pmatrix} \in P_G. \]  Since the last
two columns of $p$ are linearly dependent, and so are the first two
rows of $p$, we have $r(p)=4>2$. Also, $p_{-i,k} \neq 0$ for $i \in \{1,2\}, k \in
\{1,2,3\}$, so by Theorem \ref{TheoremFullOrSingleton}, the correlated
equilibrium polytope $P_G$ is full-dimensional. We wish to confirm this
by constructing a game $G'=(2,S,Y)$ with $Y$ near $X$ and a $p' \in P_{G'}$ in
the relative interior.

The point $p$ leads to the set
partitions $M^{(1)}_1=\{1,2\}, M^{(1)}_2=\{3\}$ and $M^{(2)}_1=\{1\},
M^{(2)}_2=\{2,3\}$. We now use Lemma \ref{LemmaSubgameAdd}:   Consider the
$(1 \times 2)$-subgame corresponding to the sets $M^{(1)}_2,M^{(2)}_2$ with payoff matrices $\begin{pmatrix}
    (9,17) & (13,10)
\end{pmatrix}$ with a correlated equilibrium $\begin{pmatrix}
   1 & 0
\end{pmatrix}$ which is coming from a
Nash equilibrium. Analogously, we compute the Nash equilibrium
$\begin{pmatrix} 1 \\ 0 \end{pmatrix}$ of the
$(2 \times 1)$-subgame corresponding to $M^{(1)}_1,M^{(2)}_1$ with payoff matrices $\begin{pmatrix}
    (10,12) \\
    (5,9)
\end{pmatrix}$. We confirm that the
convex combination \[q:= \frac{1}{22} \begin{pmatrix}
    0 & 0 & 0 \\
    0 & 0 & 0 \\
    0 & 1 & 0
\end{pmatrix}+ \frac{1}{22} \begin{pmatrix}
    1 & 0 & 0 \\
    0 & 0 & 0 \\
    0 & 0 & 0
\end{pmatrix}+ \frac{20}{22} \ p = \frac{1}{220} \begin{pmatrix}
   10 & 56 & 77 \\
   0 & 24 & 33 \\
   10 & 10 & 0 
\end{pmatrix}  \] lies in $P_G$, in accordance with
Lemma~\ref{LemmaSubgameAdd}. In fact, we are not
allowed to apply that lemma twice without checking in between that the sum
is still balanced, which in this case it would not be, as the following
discussion shows.

Note that $r(q)=6$, which is the maximal possible value. However, $q$
is not balanced: We have that $\pi^{(1)}_2(q) \in
\mathcal{C}^{X^{(1)}}_1 \cap \ \mathcal{C}^{X^{(1)}}_2$ (respectively
$\pi^{(2)}_3(q) \in \mathcal{C}^{X^{(2)}}_2 \cap \
\mathcal{C}^{X^{(2)}}_3$), but $\pi^{(1)}_1(q) \not\in
\mathcal{C}^{X^{(1)}}_2$ (respectively $\pi^{(2)}_2(q) \not\in
\mathcal{C}^{X^{(2)}}_3$). We use the method presented in the proof of Proposition
\ref{PropositionBalancing} to balance $q$.  Our first goal is to modify the second row of $X^{(1)}$ to slightly enlarge $\mathcal{C}^{X^{(1)}}_2$ in the right direction, see also Figure \ref{FigBalancing}.
\begin{figure}[ht]
\centering
\resizebox{0.5\textwidth}{!}{%
\begin{tikzpicture}
\draw (0,0) -- (10,0) -- (0,10) -- (0,0);
\draw (1.5,3.0) -- (3,0);
\draw (1.5,3.0) -- (0,5.0);
\draw (1.5,3.0) -- (4.3,5.7);
\draw (-0.2,-0.4) node {\large$(0\ 0 \ 1)$};
\draw (10.2,-0.4) node {\large$(1 \ 0\ 0)$};
\draw (-0.2,10.4) node {\large$(0 \ 1 \ 0)$};
\draw (2,6) node {\Large$\mathcal{C}^{X^{(1)}}_1$};
\draw (1.7,0.9) node {\Large$\mathcal{C}^{X^{(1)}}_2$};
\draw (4.5,3) node {\Large$\mathcal{C}^{X^{(1)}}_3$};
\draw[red] (0,5.0) -- (5,5);
\draw[red] (1,4.5) -- (5,5);
\draw[red] (1,4.5) -- (0,5.0);
\fill[pattern color=red,pattern=north east lines]  (0,5.0) -- (5,5) -- (1,4.5) -- cycle;
\draw[red] (2.5,5.5) node {\Large$Q$};
\filldraw[black] (0,5.0) circle (4pt) node[anchor=south east]{\large$w=\pi^{(1)}_2(q)$};
\filldraw[black] (5,5) circle (4pt) node[anchor=south west]
{\large$\pi^{(1)}_{3}(q)$};
\filldraw[black] (1,4.5) circle (4pt) node[anchor=north west]
{\large$\pi^{(1)}_{1}(q)$};
\draw[loosely dashed, color = blue] (0.5,11) -- (0.5,-1);
\draw[blue] (0.8,-0.5) node {\Large$h$};

\end{tikzpicture}}
\caption{The subdivision $(\mathcal{C}^{X^{(1)}}_1,\mathcal{C}^{X^{(1)}}_2,\mathcal{C}^{X^{(1)}}_3)$ of $\Delta_2$ and the relative position of the projections $\pi^{(1)}_i(q)$, which are the vertices of the triangle $Q$. The hyperplane $h$ seperates the vertex $w$ from the other two, which we then use to modify the entries of $(X^{(1)}_{2,1},X^{(1)}_{2,2},X^{(1)}_{2,3} )$ to stabilize $q$.}
\label{FigBalancing}
\end{figure}

Consider the triangle $Q=
\mathrm{conv}(\{\pi^{(1)}_1(q),\pi^{(1)}_2(q),\pi^{(1)}_3(q)\})$ and
let $w:=\pi^{(1)}_2(q)$. We define the hyperplane linear function $h
\in (\RR^3)^*$ by 
\[h(t):=-15t_{1}+t_{2}+t_{3}.\] Then $h(q_{-1,2})>0$ and $h(q_{-1,1})<0$, $h(q_{-1,3})<0$. We add a small
multiple $\varepsilon h$ to the second row of $X^{(1)}$. In our case,
$\varepsilon=\frac{1}{100}$ does the trick and we obtain 
\[Y^{(1)}=\begin{pmatrix}
  10 & 15 & 11 \\
    4.85 & 4.01 & 19.01 \\
    18 & 9 & 13   
\end{pmatrix}.  \] Secondly, we do the same procedure for player $2$ (where we adjust the third column) analogously and arrive at the new payoff-matrix for player $2$ to be \[Y^{(2)}=\begin{pmatrix}
  12 & 10 & 13.01 \\
    9 & 15 & 8.01 \\
    18 & 17 & 9.9   
\end{pmatrix}.  \] Now set $Y:=(Y^{(1)},Y^{(2)})$ and $G':=(2,S,Y)$.
Then $G \sim G'$ and $q \in P_{G'}$ is balanced.

We are in a position to use Lemma~\ref{LemmaSubgameAdd} yet twice more
time to get rid of the remaining zero entries of $q$. So we consider
the two subgames with payoff-matrices $((4.85,9))$ and $((13,9.9))$
of $G'$, calculate the unique Nash equilibria  of these $1 \times 1$-subgames ($(1)$ in both cases) and define  \[ p':=\frac{1}{2202} \begin{pmatrix}
    0 & 0 & 0 \\
    1 & 0 & 0 \\
    0 & 0 & 0
\end{pmatrix}+ \frac{1}{2202} \begin{pmatrix}
    0 & 0 & 0 \\
    0 & 0 & 0 \\
    0 & 0 & 1
\end{pmatrix}+\frac{2200}{2202}q=\frac{1}{2202} \begin{pmatrix}
    100 & 560 & 770 \\
    1 & 240 & 330 \\
    100 & 100 & 1
\end{pmatrix} \in P_{G'}, \] where all the incentive constraints are positive. In conclusion, it holds that $P_{G'}$ (and therefore $P_G$) is full-dimensional. 
\end{example}

\bibliographystyle{abbrv}
\bibliography{bibliography}
\end{document}